\documentclass[12pt,reqno]{amsart}
\usepackage[top=2cm,bottom=2cm,right=2.5cm,left=2.5cm]{geometry}
\usepackage{amssymb}
\usepackage{amsmath, amsthm, amscd, amsfonts, amssymb, graphicx, color}
\usepackage[bookmarksnumbered, colorlinks, plainpages]{hyperref}

\usepackage{hyperref}

\textheight 22.9truecm \textwidth 16.2truecm
\setlength{\oddsidemargin}{0.05in}\setlength{\evensidemargin}{0.05in}

\setlength{\topmargin}{-.5cm}

\newtheorem{theorem}{Theorem}[section]

\newtheorem{proposition}[theorem]{Proposition}
\newtheorem{corollary}[theorem]{Corollary}
\theoremstyle{definition}
\newtheorem{definition}[theorem]{Definition}
\newtheorem{example}[theorem]{Example}

\theoremstyle{remark}
\newtheorem{remark}[theorem]{Remark}

\numberwithin{equation}{section}

\begin{document}
	
	\setcounter{page}{1}
	
	\title[$K$-biframes in Hilbert spaces]{$K$-biframes in Hilbert spaces}

	\author[A. Karara, M. Rossafi]{Abdelilah Karara$^{1}$ and Mohamed Rossafi$^{2*}$}
	
	\address{$^{1}$Department of Mathematics Faculty of Sciences, University of Ibn Tofail, B.P. 133, Kenitra, Morocco}
	\email{\textcolor[rgb]{0.00,0.00,0.84}{abdelilah.karara@uit.ac.ma}}
	\address{$^{2}$Department of Mathematics Faculty of Sciences, Dhar El Mahraz University Sidi Mohamed Ben Abdellah, Fez, Morocco}
	\email{\textcolor[rgb]{0.00,0.00,0.84}{mohamed.rossafi@usmba.ac.ma}}
	\date{
		\newline \indent $^{*}$ Corresponding author}
	\subjclass[2010]{42C15; 46C05; 47B90.}
	
	\keywords{Frame, $K$-frame, biframe, $K$-biframe, Hilbert spaces.}
	
	\begin{abstract}
In this paper, we introduce a new concept of $K$-biframes for Hilbert spaces. We then examine several characterizations with the assistance of a biframe operator. Moreover, we investigate their properties from the perspective of operator theory by establishing various relationships and properties.

\end{abstract}
\maketitle

\baselineskip=12.4pt

\section{Introduction}
 
\smallskip\hspace{.6 cm} The notion of frames in Hilbert spaces was introduced by Duffin and Schaffer \cite{Duffin} in 1952 to research certain difficult nonharmonic Fourier series problems. Following the fundamental paper \cite{Daubechies} by Daubechies, Grossman, and Meyer, frame theory started to become popular, especially in the more specific context of Gabor frames and wavelet frames \cite{Gabor}. For more detailed information on frame theory, readers are recommended to consult: \cite{Christensen, Daubechies2, Han}.

The concept of $K$-frames was introduced by Laura Găvruţa, serve as a tool for investigating atomic systems with respect to a bounded linear operator $K$ in a separable Hilbert space. $K$-frames generalize ordinary frames by requiring that the lower frame bound is applicable only to elements within the range of $K$.

The concept of pair frames, referring to a pair of sequences in a Hilbert space, was first presented in \cite{Fer} by Azandaryani and Fereydooni. Parizi, Alijani and Dehghan \cite{MF} studied Biframes, which generalize controlled frames in Hilbert space. The concept of a frame is defined by a single sequence, but to define a biframe we will need two sequences. In fact, the concept of  a biframe is both a generalization of controlled frames and a special case of pair frames.

 In this paper, we introduce the concept of $K$-biframes in Hilbert space and present several examples of this type of frame. Moreover, we investigate a characterization of $K$-biframe  using the biframe operator. Finally, in our exploration of biframes, we investigate their characteristics from the perspective of operator theory by establishing various properties.

\section{Notation and preliminaries}
Throughout this paper, $\mathcal{H}$ represents a separable Hilbert space. The notation $\mathcal{B}(\mathcal{H},\mathcal{K})$ denotes the collection of all bounded linear operators from $\mathcal{H}$ to the Hilbert space $\mathcal{K}$. When $\mathcal{H} = \mathcal{K}$, this set is denoted simply as $\mathcal{B}(\mathcal{H})$. We will use $\mathcal{N}(\mathcal{T})$ and $\mathcal{R}(\mathcal{T})$ for the null  and range space of an operator $\mathcal{T}\in \mathcal{B}(\mathcal{H})$. Additionally, $\mathrm{GL}(\mathcal{H})$ represents the collection of all invertible, bounded linear operators acting on $\mathcal{H}$.

Certainly, let's begin with some preliminaries. Before  delving into the details, we briefly recall the definitions of a frame and $K$-frame:

\begin{definition}
A sequence $\left\{x_i\right\}_{i=1}^{\infty}$ in $\mathcal{H}$ is called a frame for $\mathcal{H}$ if there exist two constants $0<A \leq B<\infty$ such that
$$
A\left\| x\right\|^2 \leq \sum_{i=1}^{\infty}\vert\left\langle x, x_i\right\rangle\vert^{2}\leq B\left\| x\right\|^2, \text { for all } x \in \mathcal{H} .
$$

\end{definition}
\begin{definition}
Let $K \in \mathcal{B}(\mathcal{H})$. A sequence $\left\{x_i\right\}_{i=1}^{\infty}$ in $\mathcal{H}$ is called a $K$-frame for $\mathcal{H}$ if there exist two constants $0<\lambda \leq \mu<\infty$ such that
$$
\lambda\left\|K^{\ast} x\right\|^2 \leq \sum_{i=1}^{\infty}\vert\left\langle x, x_i\right\rangle\vert^{2} \leq \mu\|x\|^2, \text { for all } x \in \mathcal{H} .
$$

\end{definition}
\begin{theorem}\cite{Abramovich}
$\mathcal{T} \in\mathcal{B}(\mathcal{H})$ is an injective and closed range operator if and only if there exists a constant $c>0$ such that $c\|x\|^2 \leq\|\mathcal{T}  x\|^2$, for all $x \in \mathcal{H}$
\end{theorem}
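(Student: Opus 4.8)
The plan is to establish the two implications separately, treating the inequality $c\|x\|^2 \le \|\mathcal{T}x\|^2$ as the pivot.

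For the direction that assumes such a constant $c>0$ exists, I would first read off injectivity almost for free: if $\mathcal{T}x = 0$ for some $x \in \mathcal{H}$, then $c\|x\|^2 \le \|\mathcal{T}x\|^2 = 0$, and since $c>0$ this forces $x=0$, so $\mathcal{N}(\mathcal{T}) = \{0\}$. To see that $\mathcal{R}(\mathcal{T})$ is closed, I would take an arbitrary sequence $\{\mathcal{T}x_n\}$ in $\mathcal{R}(\mathcal{T})$ converging to some $y \in \mathcal{H}$. Being convergent it is Cauchy, and the inequality applied to $x_n - x_m$ gives $c\|x_n - x_m\|^2 \le \|\mathcal{T}x_n - \mathcal{T}x_m\|^2$, so $\{x_n\}$ is Cauchy in $\mathcal{H}$. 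By completeness it converges to some $x \in \mathcal{H}$, and continuity of $\mathcal{T}$ yields $\mathcal{T}x_n \to \mathcal{T}x$; hence $y = \mathcal{T}x \in \mathcal{R}(\mathcal{T})$, which shows $\mathcal{R}(\mathcal{T})$ is closed.

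For the converse, I would assume $\mathcal{T}$ is injective with closed range and produce the constant. Since $\mathcal{R}(\mathcal{T})$ is a closed subspace of the Hilbert space $\mathcal{H}$, it is itself complete, and the corestriction $\mathcal{T}\colon \mathcal{H} \to \mathcal{R}(\mathcal{T})$ is a bounded bijection. The key step is to invoke the Bounded Inverse Theorem (a consequence of the Open Mapping Theorem): the inverse $\mathcal{T}^{-1}\colon \mathcal{R}(\mathcal{T}) \to \mathcal{H}$ is bounded, say $\|\mathcal{T}^{-1}y\| \le M\|y\|$ for all $y \in \mathcal{R}(\mathcal{T})$ with some $M>0$. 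Applying this to $y = \mathcal{T}x$ gives $\|x\| = \|\mathcal{T}^{-1}\mathcal{T}x\| \le M\|\mathcal{T}x\|$, and squaring yields $c\|x\|^2 \le \|\mathcal{T}x\|^2$ with $c = M^{-2} > 0$.

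The only genuinely nontrivial ingredient is the appeal to the Open Mapping / Bounded Inverse Theorem in the converse direction; everything else is a direct manipulation of the defining inequality together with the continuity of $\mathcal{T}$. I expect the reliance on completeness of $\mathcal{H}$—both to close the range in the first implication and to guarantee the bounded inverse in the second—to be the conceptual crux, since the statement fails without it.
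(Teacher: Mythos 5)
Your proof is correct. The paper does not prove this statement at all---it is quoted as a known result from the cited reference \cite{Abramovich}---and your argument (reading injectivity directly off the lower bound, a Cauchy-sequence argument for closedness of the range, and the Bounded Inverse Theorem applied to the corestriction $\mathcal{T}\colon \mathcal{H}\to\mathcal{R}(\mathcal{T})$ for the converse) is precisely the standard proof of this characterization of operators bounded below, so there is nothing to compare and nothing to fix.
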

\begin{definition}\cite{Limaye} Let $\mathcal{H}$ be a Hilbert space, and suppose that $\mathcal{T} \in \mathcal{B}(\mathcal{H})$ has a closed range. Then there exists an operator $\mathcal{T}^{+} \in \mathcal{B}(\mathcal{H})$ for which
$$
N\left(\mathcal{T}^{+}\right)=\mathcal{R}(\mathcal{T})^{\perp}, \quad R\left(\mathcal{T}^{+}\right)=N(\mathcal{T})^{\perp}, \quad \mathcal{T} \mathcal{T}^{+} x=x, \quad x \in \mathcal{R}(\mathcal{T}) .
$$

We call the operator $\mathcal{T}^{+}$ the pseudo-inverse of $\mathcal{T}$. This operator is uniquely determined by these properties. In fact, if $\mathcal{T}$ is invertible, then we have $\mathcal{T}^{-1}=\mathcal{T}^{+}$.
\end{definition}

\begin{theorem}\cite{Douglas} \label{Doglas th} 
 Let $\mathcal{H}$ be a Hilbert space and $\mathcal{T}_{1},\mathcal{T}_{2}\in\mathcal{B}(\mathcal{H})$. The following statements are equivalent:
 \begin{enumerate}
 \item $\mathcal{R}(\mathcal{T}_{1})\subset\mathcal{R}(\mathcal{T}_{2})$
 \item  $\mathcal{T}_{1} \mathcal{T}_{1}^{\ast} \leq \lambda^2 \mathcal{T}_{2}\mathcal{T}_{2}^{\ast}$ for some $\lambda \geq 0$;
 \item $\mathcal{T}_{1} = \mathcal{T}_{2} U$ for some $U\in\mathcal{B}(\mathcal{H})$.
 
 \end{enumerate}
\end{theorem}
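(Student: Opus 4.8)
The plan is to route every implication through statement $(3)$, proving $(3)\Rightarrow(2)$, $(3)\Rightarrow(1)$, $(2)\Rightarrow(3)$, and $(1)\Rightarrow(3)$; together these yield the full equivalence. The two implications out of $(3)$ are immediate. If $\mathcal{T}_1=\mathcal{T}_2 U$, then $\mathcal{R}(\mathcal{T}_1)=\mathcal{R}(\mathcal{T}_2 U)\subset\mathcal{R}(\mathcal{T}_2)$, which is $(1)$; and since $UU^{\ast}\leq\|U\|^2 I$, congruence by $\mathcal{T}_2$ gives $\mathcal{T}_1\mathcal{T}_1^{\ast}=\mathcal{T}_2 UU^{\ast}\mathcal{T}_2^{\ast}\leq\|U\|^2\,\mathcal{T}_2\mathcal{T}_2^{\ast}$, which is $(2)$ with $\lambda=\|U\|$.

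The core of the argument is $(2)\Rightarrow(3)$. First I would rewrite the operator inequality in $(2)$ as the pointwise norm estimate $\|\mathcal{T}_1^{\ast}x\|\leq\lambda\|\mathcal{T}_2^{\ast}x\|$ for all $x\in\mathcal{H}$, which follows by pairing both sides with $x$ and using the identity $\langle\mathcal{T}_i\mathcal{T}_i^{\ast}x,x\rangle=\|\mathcal{T}_i^{\ast}x\|^2$. This estimate lets me define a map $W$ on $\mathcal{R}(\mathcal{T}_2^{\ast})$ by the rule $W(\mathcal{T}_2^{\ast}x)=\mathcal{T}_1^{\ast}x$. The key point is well-definedness: if $\mathcal{T}_2^{\ast}x=\mathcal{T}_2^{\ast}y$, then the estimate applied to $x-y$ forces $\mathcal{T}_1^{\ast}x=\mathcal{T}_1^{\ast}y$, and the same estimate shows $\|W\|\leq\lambda$ on $\mathcal{R}(\mathcal{T}_2^{\ast})$. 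I would then extend $W$ by continuity to $\overline{\mathcal{R}(\mathcal{T}_2^{\ast})}$ and set $W=0$ on its orthogonal complement $\mathcal{N}(\mathcal{T}_2)$, obtaining $W\in\mathcal{B}(\mathcal{H})$ with $W\mathcal{T}_2^{\ast}=\mathcal{T}_1^{\ast}$. Taking adjoints yields $\mathcal{T}_2 W^{\ast}=\mathcal{T}_1$, so $U:=W^{\ast}$ is exactly the operator required by $(3)$.

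To close the loop I would prove $(1)\Rightarrow(3)$ via the closed graph theorem. Given $\mathcal{R}(\mathcal{T}_1)\subset\mathcal{R}(\mathcal{T}_2)$, for each $y\in\mathcal{H}$ the vector $\mathcal{T}_1 y$ lies in $\mathcal{R}(\mathcal{T}_2)$ and hence has a unique preimage under $\mathcal{T}_2$ lying in $\mathcal{N}(\mathcal{T}_2)^{\perp}$; define $Uy$ to be this minimal-norm solution, so $\mathcal{T}_2 Uy=\mathcal{T}_1 y$ and $Uy\in\mathcal{N}(\mathcal{T}_2)^{\perp}$. Linearity of $U$ is routine. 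For boundedness I would check that the graph of $U$ is closed: if $y_n\to y$ and $Uy_n\to w$, then $\mathcal{T}_1 y_n=\mathcal{T}_2 Uy_n\to\mathcal{T}_2 w$ while also $\mathcal{T}_1 y_n\to\mathcal{T}_1 y$, forcing $\mathcal{T}_2 w=\mathcal{T}_1 y$; since $\mathcal{N}(\mathcal{T}_2)^{\perp}$ is closed we have $w\in\mathcal{N}(\mathcal{T}_2)^{\perp}$, and uniqueness of the minimal-norm solution gives $w=Uy$. Thus $U$ is closed, hence bounded, yielding $(3)$.

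The main obstacle is the bookkeeping in $(2)\Rightarrow(3)$: one must verify that the densely defined map $W$ is genuinely well-defined before extending it, and that the extension by zero on $\mathcal{N}(\mathcal{T}_2)=\overline{\mathcal{R}(\mathcal{T}_2^{\ast})}^{\perp}$ produces a globally bounded operator satisfying the exact intertwining relation $W\mathcal{T}_2^{\ast}=\mathcal{T}_1^{\ast}$, after which the passage to adjoints must be carried out cleanly. The only other subtlety is the appeal to the closed graph theorem in $(1)\Rightarrow(3)$, which is where completeness of $\mathcal{H}$ is genuinely used. I do not expect either step to present real difficulty beyond this careful bookkeeping.
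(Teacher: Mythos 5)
Your proof is correct. Note that the paper itself gives no proof of this statement: it is quoted as a known result with a citation to Douglas (1966), and your argument --- the two easy implications out of $(3)$, the densely defined intertwiner $W(\mathcal{T}_2^{\ast}x)=\mathcal{T}_1^{\ast}x$ extended by continuity and by zero on $\mathcal{N}(\mathcal{T}_2)$ for $(2)\Rightarrow(3)$, and the minimal-norm solution plus the closed graph theorem for $(1)\Rightarrow(3)$ --- is essentially Douglas's original proof, so there is nothing to compare beyond saying you have reconstructed the classical argument correctly.
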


\begin{definition}\cite{MF}
Let $(\mathcal{X}, \mathcal{Y})=\left(\left\{x_i\right\}_{i=1}^{\infty},\left\{y_i\right\}_{i=1}^{\infty}\right)$ be a biframe for $\mathcal{H}$. We define the biframe operator $ S_{\mathcal{X}, \mathcal{Y}} $ as follows:
$$
S_{\mathcal{X}, \mathcal{Y}}: \mathcal{H} \longrightarrow \mathcal{H}, \quad S_{\mathcal{X}, \mathcal{Y}}(x):=\sum_{i=1}^{\infty}\left\langle x, x_i\right\rangle y_i .
$$ 
\end{definition}

\section{$K$-biframes in Hilbert spaces}
\begin{definition}\label{k_biframe}
Let $K \in \mathcal{B}(\mathcal{H})$. A pair $(\mathcal{X}, \mathcal{Y})_{K}=\left(\left\{x_i\right\}_{i=1}^{\infty},\left\{y_i\right\}_{i=1}^{\infty}\right)$ of sequences in $\mathcal{H}$ is called a $K$-biframe for $\mathcal{H}$, if there exist two constants $0<A \leq B<\infty$ such that
$$
A\left\|K^{\ast} x\right\|^2 \leq \sum_{i=1}^{\infty}\left\langle x, x_i\right\rangle \left\langle y_i, x\right\rangle \leq B\|x\|^2, \text { for all } x \in \mathcal{H} .
$$
\end{definition}
The numbers $A$ and $B$ are called respectively the lower and upper bounds for the $K$-biframe $(\mathcal{X}, \mathcal{Y})_{K}$ respectively. 
 If $K$ is equal to $\mathcal{I}_{\mathcal{H}}$, the identity operator on $\mathcal{H}$, then $K$-biframes is biframes.

\begin{remark}
According to Definition \ref{k_biframe}, the following statements are true for a sequence $\mathcal{X}=\left\{x_k\right\}_{k=1}^{\infty}$ in $\mathcal{H}$ :
\begin{enumerate}
\item If $(\mathcal{X}, \mathcal{X})$ is a $K$-biframe for $\mathcal{H}$, then $\mathcal{X}$ is a K-frame for $\mathcal{H}$.
\item If $(\mathcal{X}, C \mathcal{X})$ is a $K$-biframe for some $C \in \mathrm{GL}(\mathcal{H})$, then $\mathcal{X}$ is a $C$-controlled K-frame for $\mathcal{H}$.
\item If $(C_{1} \mathcal{X}, C_{2} \mathcal{X})$ is a $K$-biframe for some $C_{1}$ and $C_{2}$ in $\mathrm{GL}(\mathcal{H})$, then $\mathcal{X}$ is a $(C_{1}, C_{2})$-controlled K-frame for $\mathcal{H}$.
\end{enumerate}
\end{remark}
\begin{example} Let $\mathcal{H}=\mathbb{C}^4$ and $\left\{e_1, e_2, e_3, e_4\right\}$ be an orthonormal basis for $\mathcal{H}$, Define $$K: \mathcal{H} \rightarrow \mathcal{H}\quad \text {by} \quad K e_1=e_1,\; K e_2=e_1,\; K e_3=2e_2,\; K e_4=3e_3.$$ 

 We consider two sequences $\mathcal{X}=\left\{x_i\right\}_{i=1}^{4}$ and $\mathcal{Y}=\left\{y_i\right\}_{i=1}^{4}$ defined as follows: 
 $$\mathcal{X}=\left\{e_1, e_1, 2e_2,3e_3\right\}$$
 And
  $$\mathcal{Y}=\left\{e_1, e_1, e_2,e_3\right\}.$$
  For $x\in \mathcal{H}$ we have,
  \begin{align*}
\sum_{i=1}^{4}\left\langle x, x_i\right\rangle\left\langle y_i, x\right\rangle&=\left\langle x, e_1\right\rangle\left\langle e_1, x\right\rangle+ \left\langle x, e_1\right\rangle\left\langle e_1, x\right\rangle+ \left\langle x, 2e_2\right\rangle\left\langle e_2, x\right\rangle + \left\langle x, 3e_3\right\rangle\left\langle e_3, x\right\rangle\\
&=2\left\langle x, e_1\right\rangle\left\langle e_1, x\right\rangle+2\left\langle x, e_2\right\rangle\left\langle e_2, x\right\rangle + 3\left\langle x, e_3\right\rangle\left\langle e_3, x\right\rangle \\
&\leq 3 \Vert f \Vert^{2}.
\end{align*}
On the other hand, we have
 $$\mathcal{X}=\left\{Ke_1, Ke_2, Ke_3,Ke_4\right\},$$
 And
  $$\mathcal{Y}=\left\{Ke_1, Ke_2, 2Ke_2,3Ke_3\right\}.$$
For $x\in \mathcal{H}$ we have,
  \begin{align*}
&\sum_{i=1}^{4}\left\langle x, x_i\right\rangle\left\langle y_i, x\right\rangle=\left\langle x, Ke_1\right\rangle\left\langle Ke_1, x\right\rangle+ \left\langle x, Ke_2\right\rangle\left\langle Ke_2, x\right\rangle+ \left\langle x, Ke_3\right\rangle\left\langle 2Ke_3, x\right\rangle + \left\langle x, Ke_4\right\rangle\left\langle 3Ke_4, x\right\rangle\\
&=\left\langle K^{\ast}x, e_1\right\rangle\left\langle e_1, K^{\ast}x\right\rangle+ \left\langle K^{\ast}x, e_2\right\rangle\left\langle e_2,K^{\ast} x\right\rangle+ 2\left\langle K^{\ast}x, e_3\right\rangle\left\langle e_3, K^{\ast}x\right\rangle + 3\left\langle K^{\ast}x, e_4\right\rangle\left\langle e_4,K^{\ast} x\right\rangle\\
&\geq \Vert K^{\ast}x \Vert
\end{align*}
Therefore $(\mathcal{X}, \mathcal{Y})_{K}$ is $K$-biframe with bounds $1$ and $3$
\end{example}
\begin{definition}
Let $K \in \mathcal{B}(\mathcal{H})$. A pair  $(\mathcal{X}, \mathcal{Y})_{K}=\left(\left\{x_i\right\}_{i=1}^{\infty},\left\{y_i\right\}_{i=1}^{\infty}\right)$ of sequences in $\mathcal{H}$ is said to be a tight $K$-biframe with bound $A$ if
$$
A\left\|K^* x\right\|^2=\sum_{i=1}^{\infty}\left\langle x, x_i\right\rangle\left\langle y_i, x\right\rangle, \text { for all } x \in \mathcal{H} .
$$

When $A=1$, it is called a Parseval $K$-biframe.
\end{definition}
\begin{example}
let $\lbrace e_{i}\rbrace_{i=1}^{\infty}$ be an orthonormal basis for $\mathcal{H}$,  Define $$K: \mathcal{H} \rightarrow \mathcal{H}\quad \text {by} \quad K e_i=e_i,\quad\text{for}\;\; i=1,2,3,\ldots$$

 Consider the sequences $\mathcal{X}=\left\{x_i\right\}_{i=1}^{\infty}$ and $\mathcal{Y}=\left\{y_i\right\}_{i=1}^{\infty}$ defined as follows:
$$\mathcal{X}=\left\{e_1, 2e_2, 3e_3, 4e_4, \ldots\right\}, $$ and
$$\mathcal{Y}=\left\{ e_1,\dfrac{1}{2} e_2, \dfrac{1}{3}e_3, \dfrac{1}{4}e_4,\ldots\right\} .$$
For $x \in \mathcal{H}$, we have
$$
\begin{aligned}
\sum_{i=1}^{\infty}\left\langle x, x_i\right\rangle\left\langle y_i, x\right\rangle&=\left\langle x, e_1\right\rangle\left\langle e_1, x\right\rangle+\left\langle x, 2e_2\right\rangle\left\langle \dfrac{1}{2}e_2, x\right\rangle+\left\langle x, 3e_3\right\rangle\left\langle \dfrac{1}{3}e_3, x\right\rangle\cdots\\
&= \sum_{i=1}^{\infty}\left\langle x, Ke_i\right\rangle\left\langle Ke_i, x\right\rangle\\ 
&=\sum_{i=1}^{\infty}\left|\left\langle K^*x, e_i\right\rangle\right|^2\\
&=\left\|K^* x\right\|^2
\end{aligned}
$$
Therefore $\left(\left\{x_i\right\}_{i=1}^{\infty},\left\{y_i\right\}_{i=1}^{\infty}\right)$ is a Parseval $K$-biframe,
\end{example}

\begin{theorem}
  $(\mathcal{X}, \mathcal{Y})_{K}=\left(\left\{x_i\right\}_{i=1}^{\infty},\left\{y_i\right\}_{i=1}^{\infty}\right)$ is a $K$-biframe if and only if $(\mathcal{Y}, \mathcal{X})_{K}=\left(\left\{y_i\right\}_{i=1}^{\infty},\left\{x_i\right\}_{i=1}^{\infty}\right)$ is a $K$-biframe.
\end{theorem}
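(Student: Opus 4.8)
The plan is to exploit the conjugate symmetry of the defining sum under interchanging the two sequences. The central observation is that for every $x \in \mathcal{H}$ and every index $i$, one has $\langle x, y_i\rangle \langle x_i, x\rangle = \overline{\langle x, x_i\rangle \langle y_i, x\rangle}$, since $\langle x, y_i\rangle = \overline{\langle y_i, x\rangle}$ and $\langle x_i, x\rangle = \overline{\langle x, x_i\rangle}$ by conjugate symmetry of the inner product. Summing over $i$ this yields
$$\sum_{i=1}^{\infty}\langle x, y_i\rangle \langle x_i, x\rangle = \overline{\sum_{i=1}^{\infty}\langle x, x_i\rangle \langle y_i, x\rangle}.$$

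First I would note that the $K$-biframe inequality for $(\mathcal{X}, \mathcal{Y})_{K}$ sandwiches the sum $\sum_{i}\langle x, x_i\rangle \langle y_i, x\rangle$ between the real quantities $A\|K^{*}x\|^2$ and $B\|x\|^2$; hence this sum is a real number for every $x$. Consequently it coincides with its own complex conjugate, and by the displayed identity the two sums $\sum_{i}\langle x, x_i\rangle \langle y_i, x\rangle$ and $\sum_{i}\langle x, y_i\rangle \langle x_i, x\rangle$ are equal for all $x \in \mathcal{H}$.

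Having established this equality, the conclusion is immediate: the inequalities
$$A\|K^{*}x\|^2 \leq \sum_{i=1}^{\infty}\langle x, y_i\rangle \langle x_i, x\rangle \leq B\|x\|^2$$
hold with the \emph{same} bounds $A$ and $B$, so $(\mathcal{Y}, \mathcal{X})_{K}$ is a $K$-biframe. The converse direction follows by the identical argument with the roles of $\mathcal{X}$ and $\mathcal{Y}$ reversed, so only one implication needs to be written out in full.

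The argument reduces to a single conjugation computation, so there is no serious obstacle. The only subtlety worth flagging is the implicit reality of the middle sum: the definition of a $K$-biframe is only meaningful when $\sum_{i}\langle x, x_i\rangle \langle y_i, x\rangle$ is real, and this fact must be explicitly invoked (rather than passed over) in order to justify replacing the sum by its conjugate. I would therefore make the reality of the sum the very first step of the proof.
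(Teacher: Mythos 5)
Your proof is correct and follows essentially the same route as the paper: both arguments rest on the identity $\sum_{i}\langle x, y_i\rangle\langle x_i, x\rangle = \overline{\sum_{i}\langle x, x_i\rangle\langle y_i, x\rangle}$ together with the reality of the sum (forced by the sandwiching between $A\|K^{*}x\|^2$ and $B\|x\|^2$), yielding the same bounds $A$ and $B$ for $(\mathcal{Y}, \mathcal{X})_{K}$. In fact you are slightly more careful than the paper, which writes the sum equals its own conjugate without explicitly noting that this is justified by the reality of the sum.
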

\begin{proof}
Let $(\mathcal{X}, \mathcal{Y})_{K}=\left(\left\{x_i\right\}_{i=1}^{\infty},\left\{y_i\right\}_{i=1}^{\infty}\right)$ be a $K$-biframe with bounds $A$ and $B$. Then, for every $x \in \mathcal{H}$,
$$
A\left\|K^{\ast} x\right\|^2\leq \sum_{i=1}^{\infty}\left\langle x, x_i\right\rangle\left\langle y_i, x\right\rangle \leq B\left\| x\right\|^2.
$$

Now, we can write
$$
\begin{aligned}
\sum_{i=1}^{\infty}\left\langle x, x_i\right\rangle\left\langle y_i, x\right\rangle&=\overline{\sum_{i=1}^{\infty}\left\langle x, x_i\right\rangle\left\langle y_i, x\right\rangle}\\
&=\sum_{i=1}^{\infty}\overline{\left\langle x, x_i\right\rangle\left\langle y_i, x\right\rangle}\\ &=\sum_{i=1}^{\infty}\left\langle x, y_i\right\rangle\left\langle x_i, x\right\rangle .
\end{aligned}
$$
Therefore
$$
A\left\|K^{\ast} x\right\|^2 \leq \sum_{i=1}^{\infty}\left\langle x, y_i\right\rangle\left\langle x_i, x\right\rangle \leq B\left\| x\right\|^2 .
$$

This implies that, $(\mathcal{Y}, \mathcal{X})_{K}$ is a $K$-biframe with bounds $A$ and $B$. The reverse of this statement can be proved similarly..
\end{proof}
 \begin{proposition}
Let $K\in \mathcal{B}(\mathcal{H})$ and $\lbrace z_{i}\rbrace_{i=1}^{\infty}$ be a $K$-frame for $\mathcal{H}$. If $(\mathcal{X}, \mathcal{Y})_{K}=\left(\left\{x_i\right\}_{i=1}^{\infty},\left\{y_i\right\}_{i=1}^{\infty}\right)$, $(\mathcal{Z}, \mathcal{Y})_{K}=\left(\left\{z_i\right\}_{i=1}^{\infty},\left\{y_i\right\}_{i=1}^{\infty}\right)$ and $ (\mathcal{X}, \mathcal{Z})_{K}=\left(\left\{x_i\right\}_{i=1}^{\infty},\left\{z_i\right\}_{i=1}^{\infty}\right)$ are $K$-biframes for $\mathcal{H}$, then  $(\mathcal{Z}+\mathcal{X}, \mathcal{Y}+\mathcal{Z})_{K}=\left(\left\{z_i\right\}_{i=1}^{\infty}+\left\{x_i\right\}_{i=1}^{\infty},\left\{y_i\right\}_{i=1}^{\infty}+\left\{z_i\right\}_{i=1}^{\infty}\right)$ is a $K$-biframes for $\mathcal{H}$. 
\end{proposition}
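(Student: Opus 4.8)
The plan is to verify the defining two-sided $K$-biframe inequality directly for the combined pair $(\mathcal{Z}+\mathcal{X}, \mathcal{Y}+\mathcal{Z})_{K}$ by expanding the relevant bilinear sum. Fixing $x \in \mathcal{H}$, I would start from
\[
\sum_{i=1}^{\infty}\left\langle x, z_i + x_i\right\rangle\left\langle y_i + z_i, x\right\rangle
\]
and use the additivity of the inner product in each slot to expand the product inside the sum into four pieces, namely
\[
\sum_{i=1}^{\infty}\left\langle x, x_i\right\rangle\left\langle y_i, x\right\rangle
+\sum_{i=1}^{\infty}\left\langle x, z_i\right\rangle\left\langle y_i, x\right\rangle
+\sum_{i=1}^{\infty}\left\langle x, x_i\right\rangle\left\langle z_i, x\right\rangle
+\sum_{i=1}^{\infty}\left\langle x, z_i\right\rangle\left\langle z_i, x\right\rangle.
\]
The point of this rearrangement is that the first three sums are precisely the biframe sums of $(\mathcal{X}, \mathcal{Y})_{K}$, $(\mathcal{Z}, \mathcal{Y})_{K}$, and $(\mathcal{X}, \mathcal{Z})_{K}$, while the last sum equals $\sum_{i}\left|\left\langle x, z_i\right\rangle\right|^2$, the $K$-frame sum of $\{z_i\}$. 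Thus all four pieces are governed by a hypothesis already in hand.

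Next I would apply the hypotheses term by term. Writing $A_{XY}, B_{XY}$, etc. for the bounds of the three given $K$-biframes and $\lambda, \mu$ for the bounds of the $K$-frame $\{z_i\}$, each of the four sums is a nonnegative real number bounded below by its lower constant times $\left\|K^{\ast}x\right\|^2$ and above by its upper constant times $\left\|x\right\|^2$. Summing the four lower estimates gives
\[
\left(A_{XY}+A_{ZY}+A_{XZ}+\lambda\right)\left\|K^{\ast}x\right\|^2
\]
as a lower bound for the total, and summing the four upper estimates gives $\left(B_{XY}+B_{ZY}+B_{XZ}+\mu\right)\left\|x\right\|^2$ as an upper bound. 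Since each lower constant is strictly positive, the combined lower bound $A' := A_{XY}+A_{ZY}+A_{XZ}+\lambda$ is positive and the combined upper bound $B'$ is finite, which is exactly the $K$-biframe inequality for $(\mathcal{Z}+\mathcal{X}, \mathcal{Y}+\mathcal{Z})_{K}$.

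I expect the only point requiring genuine care to be the bookkeeping of the expansion rather than any analytic difficulty. Splitting the single series into four is legitimate because each component series converges on its own: the three biframe sums are controlled by their upper bounds, and the frame sum converges by the Bessel property inherent in a $K$-frame, so the finite regrouping is valid and each resulting term is real. The subtle spot is matching the two cross terms correctly, confirming that $\sum_{i}\left\langle x, x_i\right\rangle\left\langle z_i, x\right\rangle$ is the biframe sum of $(\mathcal{X}, \mathcal{Z})_{K}$ (with $\mathcal{X}$ in the first slot and $\mathcal{Z}$ in the second) and not that of $(\mathcal{Z}, \mathcal{X})_{K}$; once the four pieces are correctly identified, the estimates are immediate.
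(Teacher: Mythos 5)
Your proposal is correct and follows essentially the same route as the paper's own proof: expanding the combined sum into the three biframe sums plus the $K$-frame sum $\sum_{i}\left|\left\langle x, z_i\right\rangle\right|^2$, then adding the four lower bounds and the four upper bounds to obtain the $K$-biframe inequality. Your extra remarks on the convergence of the split series and on correctly identifying the cross term with $(\mathcal{X}, \mathcal{Z})_{K}$ are sound points of care that the paper leaves implicit.
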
 
\begin{proof}
for every $x\in \mathcal{H}$, we have
$$
\begin{aligned}
\sum_{i=1}^{\infty}\left\langle x,z_i+ x_i\right\rangle\left\langle y_i+z_i, x\right\rangle&=\sum_{i=1}^{\infty}\left(\left\langle x,z_i\right\rangle+\left\langle x,x_i\right\rangle\right )\left(\left\langle y_i, x\right\rangle +\left\langle z_i, x\right\rangle\right)\\
&=\sum_{i=1}^{\infty}\left\langle x,z_i\right\rangle\left\langle y_i, x\right\rangle+\sum_{i=1}^{\infty}\vert\left\langle x,z_i\right\rangle\vert^{2}+\sum_{i=1}^{\infty}\left\langle x,x_i\right\rangle\left\langle y_i, x\right\rangle+\sum_{i=1}^{\infty}\left\langle x,x_i\right\rangle\left\langle z_i, x\right\rangle
\end{aligned}
$$
Since  $(\mathcal{X}, \mathcal{Y})_{K}$, $(\mathcal{Z}, \mathcal{Y})_{K}$ and $ (\mathcal{X}, \mathcal{Z})_{K}$ are $K$-biframes and  $\lbrace z_{i}\rbrace_{i=1}^{\infty}$ be a $K$-frame for $\mathcal{H}$, we have
\begin{equation}\label{eq31}
A_{\mathcal{X}, \mathcal{Y}}\left\|K^{\ast} x\right\|^2 \leq \sum_{i=1}^{\infty}\left\langle x, x_i\right\rangle \left\langle y_i, x\right\rangle \leq B_{\mathcal{X}, \mathcal{Y}}\|x\|^2, \text { for all } x \in \mathcal{H} .
\end{equation}
And 
\begin{equation}\label{eq32}
A_{\mathcal{Z}, \mathcal{Y}}\left\|K^{\ast} x\right\|^2 \leq \sum_{i=1}^{\infty}\left\langle x, z_i\right\rangle \left\langle y_i, x\right\rangle \leq B_{\mathcal{Z}, \mathcal{Y}}\|x\|^2, \text { for all } x \in \mathcal{H} .
\end{equation}
And
\begin{equation}\label{eq33}
A_{\mathcal{X}, \mathcal{Z}}\left\|K^{\ast} x\right\|^2 \leq \sum_{i=1}^{\infty}\left\langle x, x_i\right\rangle \left\langle z_i, x\right\rangle \leq B_{\mathcal{X}, \mathcal{Z}}\|x\|^2, \text { for all } x \in \mathcal{H} .
\end{equation}
 And
\begin{equation}\label{eq34}
A\left\|K^{\ast} x\right\|^2 \leq \sum_{i=1}^{\infty}\vert\left\langle x,z_i\right\rangle\vert^{2} \leq B\|x\|^2, \text { for all } x \in \mathcal{H} .
\end{equation}
Hence From (\ref{eq31}), (\ref{eq32}),  (\ref{eq33}) and (\ref{eq34}) we conclude that for all $x \in \mathcal{H}$,
$$(A_{\mathcal{X}, \mathcal{Y}}+A_{\mathcal{Z}, \mathcal{Y}}+A_{\mathcal{X}, \mathcal{Z}}+A)\left\|K^{\ast} x\right\|^2 \leq\sum_{i=1}^{\infty}\left\langle x,z_i+ x_i\right\rangle\left\langle y_i+z_i, x\right\rangle\leq (B_{\mathcal{X}, \mathcal{Y}}+B_{\mathcal{Z}, \mathcal{Y}}+B_{\mathcal{X}, \mathcal{Z}}+B)\|x\|^2.$$
Which implies that $(\mathcal{Z}+\mathcal{X}, \mathcal{Y}+\mathcal{Z})_{K}$ is a $K$-biframes for $\mathcal{H}$. 
\end{proof} 
\begin{theorem}
Let $K_1, K_2 \in \mathcal{B}(\mathcal{H})$. If  $(\mathcal{X}, \mathcal{Y})_{K}=\left(\left\{x_i\right\}_{i=1}^{\infty},\left\{y_i\right\}_{i=1}^{\infty}\right)$ is an K$_{j}$-biframe for $j\in\lbrace 1,\;2\rbrace$  and $\alpha_{1}, \alpha_{2}$ are scalars. Then the following holds: 
\begin{enumerate}
\item $(\mathcal{X}, \mathcal{Y})_{K}$ is  $(\alpha_{1} K_1+\alpha_{2} K_2)$-biframe
\item $(\mathcal{X}, \mathcal{Y})_{K}$ is  $K_1 K_2$-biframe.
\end{enumerate}
\end{theorem}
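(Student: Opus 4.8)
The plan is to exploit the fact that, in the definition of a $K$-biframe, the operator appears only in the lower bound: the upper estimate $\sum_{i=1}^{\infty}\langle x,x_i\rangle\langle y_i,x\rangle\le B\|x\|^2$ is a statement about the pair $(\mathcal{X},\mathcal{Y})$ alone. Writing $S(x):=\sum_{i=1}^{\infty}\langle x,x_i\rangle\langle y_i,x\rangle$, I would carry the upper bound $S(x)\le B_1\|x\|^2$ supplied by the $K_1$-biframe hypothesis unchanged into both claims, so that the whole task reduces to producing, for each target operator $L$ (namely $L=\alpha_1K_1+\alpha_2K_2$ in (1) and $L=K_1K_2$ in (2)), a constant $A>0$ with $A\|L^*x\|^2\le S(x)$ for all $x$. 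I would first record two facts used repeatedly: that $S(x)$ is real and, by the lower bound of the $K_1$-biframe, nonnegative, $S(x)\ge A_1\|K_1^*x\|^2\ge0$; and that the two hypotheses rearrange to $\|K_j^*x\|^2\le A_j^{-1}S(x)$ for $j=1,2$.

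For part (1) I would use $(\alpha_1K_1+\alpha_2K_2)^*=\overline{\alpha_1}K_1^*+\overline{\alpha_2}K_2^*$ together with the triangle inequality and the elementary bound $(a+b)^2\le 2a^2+2b^2$ to estimate
$$\|(\alpha_1K_1+\alpha_2K_2)^*x\|^2\le 2|\alpha_1|^2\|K_1^*x\|^2+2|\alpha_2|^2\|K_2^*x\|^2\le\Big(\frac{2|\alpha_1|^2}{A_1}+\frac{2|\alpha_2|^2}{A_2}\Big)S(x).$$
Setting $C:=\frac{2|\alpha_1|^2}{A_1}+\frac{2|\alpha_2|^2}{A_2}$, this yields $C^{-1}\|(\alpha_1K_1+\alpha_2K_2)^*x\|^2\le S(x)$ whenever $C>0$, so $(\mathcal{X},\mathcal{Y})$ is an $(\alpha_1K_1+\alpha_2K_2)$-biframe with lower bound $C^{-1}$ and upper bound $B_1$.

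For part (2) I would instead invoke $(K_1K_2)^*=K_2^*K_1^*$ and submultiplicativity of the operator norm: $\|(K_1K_2)^*x\|^2=\|K_2^*K_1^*x\|^2\le\|K_2\|^2\|K_1^*x\|^2\le\frac{\|K_2\|^2}{A_1}S(x)$, whence $\frac{A_1}{\|K_2\|^2}\|(K_1K_2)^*x\|^2\le S(x)$ and $(\mathcal{X},\mathcal{Y})$ is a $K_1K_2$-biframe with lower bound $A_1\|K_2\|^{-2}$ and upper bound $B_1$. (If necessary I would shrink the lower constant so that $0<A\le B_1$ is respected; this is harmless since $\|L^*x\|^2\ge0$.)

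I expect the only delicate point to be the degenerate cases, namely $C=0$ (equivalently $\alpha_1=\alpha_2=0$) in (1) and $K_2=0$ in (2), where the divisions above are illegitimate. In each such case the target operator $L$ is itself the zero operator, so $\|L^*x\|^2=0$ and the lower inequality $A\|L^*x\|^2\le S(x)$ holds for every $A>0$ precisely because $S(x)\ge0$; thus any positive lower bound, e.g. $A=1$, completes the argument. Beyond this bookkeeping I anticipate no genuine obstacle, since both parts amount to applying the reverse (lower-bound) inequalities furnished by the $K_j$-biframe hypotheses, the upper bound being automatic.
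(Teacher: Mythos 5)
Your proposal follows essentially the same route as the paper: carry the ($K$-independent) upper bound across unchanged, and produce a lower bound for the new operator by combining the hypothesized lower bounds, via a triangle-type estimate in part (1) and submultiplicativity $\|K_2^*K_1^*x\|\le\|K_2^*\|\,\|K_1^*x\|$ in part (2). There are, however, two points where your version is actually tighter than the paper's. First, the paper's proof of (1) rests on the displayed inequality
\[
\left\|\left(\alpha_{1} K_1+\alpha_{2} K_2\right)^{\ast} x\right\|^2 \leq|\alpha_{1}|^2\left\|K_1^{\ast} x\right\|^2+|\alpha_{2}|^2\left\|K_2^{\ast} x\right\|^2,
\]
which is false in general (take $K_1=K_2=I$ and $\alpha_1=\alpha_2=1$: the left side is $4\|x\|^2$, the right side $2\|x\|^2$); your use of $\|u+v\|^2\le 2\|u\|^2+2\|v\|^2$ repairs this at the cost of a factor $2$ in the constant, which is harmless. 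Second, you isolate the degenerate cases ($\alpha_1=\alpha_2=0$, or $K_2=0$), where the divisions by $C$ or $\|K_2\|^2$ break down, and resolve them by noting that $S(x)\ge A_1\|K_1^*x\|^2\ge 0$ makes the lower bound vacuous for the zero operator; the paper silently divides by these quantities. One cosmetic difference: the paper takes $\min\{B_1,B_2\}$ as the upper bound while you keep $B_1$; both are valid, the paper's being marginally sharper.
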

\begin{proof}
(1) Let $(\mathcal{X}, \mathcal{Y})_{K}=\left(\left\{x_i\right\}_{i=1}^{\infty},\left\{y_i\right\}_{i=1}^{\infty}\right)$
  be $K_{1}$-biframe and $K_{2}$-biframe. Then for $j=1$, there exist two constants $0<A \leq B<\infty$ such that
$$
A\left\|K_1^{\ast} f\right\|^2 \leq \sum_{i=1}^{\infty}\left\langle x, x_i\right\rangle\left\langle y_i, x\right\rangle \leq B\|x\|^2, \text { for all } x \in \mathcal{H} .
$$
And for $j=2$, there exist two constants $0<C \leq D<\infty$ such that
$$
C\left\|K_2^{\ast} x\right\|^2 \leq \sum_{i=1}^{\infty}\left\langle x, x_i\right\rangle\left\langle y_i, x\right\rangle \leq D\|x\|^2, \text { for all } x \in \mathcal{H} .
$$
Now, we can write
$$
\begin{aligned}
\left\|\left(\alpha_{1} K_1+\alpha_{2} K_2\right)^{\ast} x\right\|^2 & \leq|\alpha_{1}|^2\left\|K_1^{\ast} x\right\|^2+|\alpha_{2}|^2\left\|K_2^{\ast} x\right\|^2 \\
& \leq|\alpha_{1}|^2\left(\frac{1}{A} \sum_{i=1}^{\infty}\left\langle x, x_i\right\rangle\left\langle y_i, x\right\rangle\right)+|\alpha_{2}|^2\left(\frac{1}{C} \sum_{i=1}^{\infty}\left\langle x, x_i\right\rangle\left\langle y_i, x\right\rangle\right) \\
& =\left(\frac{|\alpha_{1}|^2}{A}+\frac{|\alpha_{2}|^2}{C}\right) \sum_{i=1}^{\infty}\left\langle x, x_i\right\rangle\left\langle y_i, x\right\rangle .
\end{aligned}
$$

It follows that
$$
\left(\frac{A C}{C|\alpha|^2+A|\alpha_{2}|^2}\right)\left\|\left(\alpha_{1} K_1+\alpha_{2} K_2\right)^{\ast} x\right\|^2 \leq \sum_{i=1}^{\infty}\left\langle x, x_i\right\rangle\left\langle y_i, x\right\rangle .
$$

Hence $(\mathcal{X}, \mathcal{Y})_{K}$ satisfies the lower frame condition. And we have
$$
\sum_{i=1}^{\infty}\left\langle x, x_i\right\rangle\left\langle y_i, x\right\rangle \leq\min \lbrace B,D\rbrace\|x\|^2, \text { for all } x \in \mathcal{H}
$$
it follows that
$$
\left(\frac{A C}{C|\alpha|^2+A|\alpha_{2}|^2}\right)\left\|\left(\alpha_{1} K_1+\alpha_{2} K_2\right)^{\ast} x\right\|^2\leq\sum_{i=1}^{\infty}\left\langle x, x_i\right\rangle\left\langle y_i, x\right\rangle \leq\min \lbrace B,D\rbrace\|x\|^2, \text { for all } x \in \mathcal{H}
$$
Therefore $(\mathcal{X}, \mathcal{Y})_{K}$ is  $(\alpha_{1} K_1+\alpha_{2} K_2)$-biframe.

(2) Now for each $x \in \mathcal{H}$, we have
$$
\left\|\left(K_1 K_2\right)^{\ast} x\right\|^2=\left\|K_2^{\ast} K_1^{\ast} x\right\|^2 \leq\left\|K_2^{\ast}\right\|^2\left\|K_1^{\ast} x\right\|^2 .
$$

Since  $(\mathcal{X}, \mathcal{Y})_{K}$ is $K_1$-biframe, then there exist two constants $0<A \leq B<\infty$ such that
$$
A\left\|K_1^{\ast} x\right\|^2 \leq \sum_{i=1}^{\infty}\left\langle x, x_i\right\rangle\left\langle y_i, x\right\rangle \leq B\|x\|^2, \text { for all } x \in \mathcal{H} .
$$
Therefore
$$
\frac{1}{\left\|K_2^{\ast}\right\|^2} \left\|\left(K_1 K_2\right)^{\ast} x\right\|^2\leq\left\|K_1^{\ast} x\right\|^2 \leq \frac{1}{A} \sum_{i=1}^{\infty}\left\langle x, x_i\right\rangle\left\langle y_i, x\right\rangle \leq \frac{B}{A}\|x\|^2 .
$$

This implies that
$$
\frac{A}{\left\|K_2^{\ast}\right\|^2}\left\|\left(K_1 K_2\right)^{\ast} x\right\|^2 \leq \sum_{i=1}^{\infty}\left\langle x, x_i\right\rangle\left\langle y_i, x\right\rangle \leq B\|x\|^2, \text { for all } x \in \mathcal{H} .
$$

Therefore $(\mathcal{X}, \mathcal{Y})_{K}$ is $K_1 K_2$-biframe for $\mathcal{H} $.
\end{proof}

\begin{corollary}
 Let $n\in\mathbb{N}\setminus\lbrace  0,1\rbrace$ and $K_j \in \mathcal{B}(\mathcal{H})$ for $j\in  [\![1;n]\!] $. If  $(\mathcal{X}, \mathcal{Y})_{K}=\left(\left\{x_i\right\}_{i=1}^{\infty},\left\{y_i\right\}_{i=1}^{\infty}\right)$ is an K$_{j}$-biframe for $j\in  [\![1;n]\!]$  and $\alpha_{1}, \alpha_{2}\cdots, \alpha_{n}$ are scalars. Then the following holds: 
\begin{enumerate}
\item $(\mathcal{X}, \mathcal{Y})_{K}$ is  $(\sum\limits_{j=1}^{n}\alpha_{j} K_j)$-biframe
\item $(\mathcal{X}, \mathcal{Y})_{K}$ is  $(K_1K_2\cdots K_{n})$-biframe.
\end{enumerate}
\end{corollary}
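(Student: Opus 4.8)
The plan is to prove both statements by induction on $n$, using the preceding theorem (the case $n=2$) as the engine that drives each inductive step. The hypothesis guarantees that $(\mathcal{X}, \mathcal{Y})_{K}$ is simultaneously a $K_j$-biframe for every $j \in \{1,\ldots,n\}$, so at each stage I will combine a partially assembled operator with one further $K_j$ and invoke the two-operator result.

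For part (1), the base case $n=2$ is exactly statement (1) of the theorem. For the inductive step, suppose the claim holds for $n-1$ operators, so that $(\mathcal{X}, \mathcal{Y})_{K}$ is an $L$-biframe with $L := \sum_{j=1}^{n-1}\alpha_j K_j$. Since $(\mathcal{X}, \mathcal{Y})_{K}$ is also a $K_n$-biframe by hypothesis, I would apply statement (1) of the theorem to the two operators $L$ and $K_n$ with the scalars $1$ and $\alpha_n$. This yields that $(\mathcal{X}, \mathcal{Y})_{K}$ is a $(1\cdot L + \alpha_n K_n)$-biframe, and since $1\cdot L + \alpha_n K_n = \sum_{j=1}^{n}\alpha_j K_j$, the induction closes.

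For part (2) I would proceed analogously. The base case $n=2$ is statement (2) of the theorem. Assuming $(\mathcal{X}, \mathcal{Y})_{K}$ is a $(K_1 K_2 \cdots K_{n-1})$-biframe, set $M := K_1 K_2 \cdots K_{n-1}$. Applying statement (2) of the theorem to the operators $M$ and $K_n$---both of which supply biframe properties, $M$ by the inductive hypothesis and $K_n$ by assumption---gives that $(\mathcal{X}, \mathcal{Y})_{K}$ is an $M K_n$-biframe, that is, a $(K_1 K_2 \cdots K_n)$-biframe, completing the induction.

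The arguments are routine once the inductive scaffolding is fixed; the only point demanding care is the bookkeeping, namely confirming at each step that the operator produced by the inductive hypothesis (the partial sum $L$ in part (1), or the partial product $M$ in part (2)) is genuinely an operator for which $(\mathcal{X}, \mathcal{Y})_{K}$ is a biframe, so that both hypotheses of the theorem are met before it is applied. A secondary caveat appears in part (1): if all the scalars $\alpha_j$ vanish, the explicit lower-bound constant inherited from the theorem degenerates, so I would tacitly assume the $\alpha_j$ are not all zero (equivalently $\sum_{j=1}^{n}\alpha_j K_j \neq 0$), matching the convention implicit in the theorem's proof.
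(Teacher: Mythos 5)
Your proof is correct, but it takes a genuinely different route from the paper's. The paper does not induct: it re-runs the two-operator argument directly with $n$ terms. For (1) it expands $\bigl\|\bigl(\sum_{j=1}^{n}\alpha_j K_j\bigr)^{\ast}x\bigr\|^2$ against the $n$ lower bounds $A_j\|K_j^{\ast}x\|^2 \le \sum_{i=1}^{\infty}\langle x,x_i\rangle\langle y_i,x\rangle$, arriving at the explicit lower bound $\bigl(\sum_{j=1}^{n}|\alpha_j|^2/A_j\bigr)^{-1}$ and upper bound $\min_j B_j$; for (2) it writes $\|(K_1\cdots K_n)^{\ast}x\|^2 \le \|K_n^{\ast}\cdots K_2^{\ast}\|^2\,\|K_1^{\ast}x\|^2$ and invokes only the $K_1$-biframe property. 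Your induction buys economy and treats the statement as a true corollary: each step is a legitimate application of the two-operator theorem, and the bookkeeping you flag (that the partial sum $L$ and partial product $M$ are operators for which $(\mathcal{X},\mathcal{Y})_K$ is a biframe) is exactly what the inductive hypothesis supplies, so both hypotheses of the theorem are met at every stage. What the paper's direct computation buys is explicit, non-recursive frame bounds, whereas yours come out nested through the induction. Two side remarks: the paper's expansion (in both the theorem and this corollary) uses the inequality $\|a+b\|^2\le\|a\|^2+\|b\|^2$, which is false in general (it needs a factor of $2$); this affects only the constants, not the truth of the statement, and your black-box use of the theorem avoids repeating that step, though your proof is of course only as sound as the theorem it cites. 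As for your caveat when all $\alpha_j$ vanish: then $\sum_j \alpha_j K_j=0$ and the lower bound condition reads $0\le\sum_{i=1}^{\infty}\langle x,x_i\rangle\langle y_i,x\rangle$, which holds automatically because $(\mathcal{X},\mathcal{Y})_K$ is a $K_1$-biframe, so the conclusion survives even in that degenerate case; the paper ignores this issue entirely, so your explicit flag is, if anything, more careful.
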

\begin{proof}
(1) Suppose that $n\in\mathbb{N}\setminus\lbrace  0,1\rbrace$ and for every $j\in  [\![1;n]\!]$, $(\mathcal{X}, \mathcal{Y})_{K}$ is  K$_{j}$-biframe . Then for each $j\in  [\![1;n]\!]$ there exist positive constants $0<A_{j} \leq B_{j}<\infty$ such that
$$
A_{j}\left\|K_j^{\ast} x\right\|^2 \leq \sum_{i=1}^{\infty}\left\langle x, x_i\right\rangle\left\langle y_i, x\right\rangle \leq B_{j}\|x\|^2, \text { for all } x \in \mathcal{H} .
$$
Now, we can write
$$
\begin{aligned}
\left\|\left(\sum\limits_{j=1}^{n}\alpha_{j} K_j\right)^{\ast} x\right\|^2 & =\left\|\alpha_{1} K_1^{\ast} x+\left(\alpha_{2}K_2+\cdots+\alpha_{n} K_n\right)^{\ast} x\right\|^2 \\
& \leq|\alpha_{1}|^2\left\|K_1^{\ast} x\right\|^2+\left\|\left(\alpha_{2}K_2+\cdots+\alpha_{n} K_n\right)^{\ast} x\right\|^2 \\
& \leq|\alpha_{1}|^2\left\|K_1^{\ast} x\right\|^2+\cdots+|\alpha_{n}|^2\left\|K_n^{\ast} x\right\|^2 \\
& \leq|\alpha_{1}|^2\left(\frac{1}{A_{1}} \sum_{i=1}^{\infty}\left\langle x, x_i\right\rangle\left\langle y_i, x\right\rangle\right)+\cdots+|\alpha_{n}|^2\left(\frac{1}{A_{n}} \sum_{i=1}^{\infty}\left\langle x, x_i\right\rangle\left\langle y_i, x\right\rangle\right) \\
& =\left(\frac{|\alpha_{1}|^2}{A_{1}}+\cdots+\frac{|\alpha_{n}|^2}{A_{n}}\right) \sum_{i=1}^{\infty}\left\langle x, x_i\right\rangle\left\langle y_i, x\right\rangle\\
&= \left(\sum_{j=1}^{n}\frac{|\alpha_{j}|^2}{A_{j}}\right) \sum_{i=1}^{\infty}\left\langle x, x_i\right\rangle\left\langle y_i, x\right\rangle .
\end{aligned}
$$
Hence $(\mathcal{X}, \mathcal{Y})_{K}$ satisfies the lower frame condition. And we have
$$
\sum_{i=1}^{\infty}\left\langle x, x_i\right\rangle\left\langle y_i, x\right\rangle \leq\min\limits_{j\in  [\![1;n]\!]} \lbrace B_{j}\rbrace\|x\|^2, \text { for all } x \in \mathcal{H}
.$$
It follows that
$$
\left(\sum_{j=1}^{n}\frac{|\alpha_{j}|^2}{A_{j}}\right)^{-1}\left\|\left(\sum\limits_{j=1}^{n}\alpha_{j} K_j\right)^{\ast} x\right\|^2\leq\sum_{i=1}^{\infty}\left\langle x, x_i\right\rangle\left\langle y_i, x\right\rangle \leq\min\limits_{j\in  [\![1;n]\!]} \lbrace B_{j}\rbrace\|x\|^2, \text { for all } x \in \mathcal{H}.
$$
Hence $(\mathcal{X}, \mathcal{Y})_{K}$ is  $(\sum\limits_{j=1}^{n}\alpha_{j} K_j)$-biframe

(2) Now for each $x \in \mathcal{H}$, we have
$$
\left\|(K_1K_2\cdots K_{n})^{\ast} x\right\|^2=\left\|K_n^{\ast} \cdots K_1^{\ast} x\right\|^2 \leq\left\|K_n^{\ast} \cdots K_2^{\ast}\right\|^2\left\|K_1^{\ast} x\right\|^2 .
$$
Since  $(\mathcal{X}, \mathcal{Y})_{K}$ is $K_1$-biframe, then there exist two constants $0<A_{1} \leq B_{1}<\infty$ such that 
$$
A_{1}\left\|K_1^{\ast} x\right\|^2 \leq \sum_{i=1}^{\infty}\left\langle x, x_i\right\rangle\left\langle y_i, x\right\rangle \leq B_{1}\|x\|^2, \text { for all } x \in \mathcal{H} .
$$
Therefore
$$
\frac{1}{\left\|K_n^{\ast} \cdots K_2^{\ast}\right\|^2} \left\|(K_1K_2\cdots K_{n})^{\ast} x\right\|^2 \leq \frac{1}{A_{1}} \sum_{i=1}^{\infty}\left\langle x, x_i\right\rangle\left\langle y_i, x\right\rangle \leq \frac{B_{1}}{A_{1}}\|x\|^2 .
$$

This implies that
$$
\frac{A_{1}}{\left\|K_n^{\ast} \cdots K_2^{\ast}\right\|^2}\left\|(K_1K_2\cdots K_{n})^{\ast} x\right\|^2\leq \sum_{i=1}^{\infty}\left\langle x, x_i\right\rangle\left\langle y_i, x\right\rangle \leq B_{1}\|x\|^2, \text { for all } x \in \mathcal{H} .
$$

Therefore $(\mathcal{X}, \mathcal{Y})_{K}$ is $(K_1K_2\cdots K_{n})$-biframe for $\mathcal{H} $.
\end{proof}
\begin{theorem}
Let $K \in \mathcal{B}(\mathcal{H})$ with $\|K\| \geq 1$. Then every ordinary biframe is a $K$-biframe for $\mathcal{H}$.
\end{theorem}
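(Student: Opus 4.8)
The plan is to keep the upper bound unchanged and to produce the lower $K$-biframe bound from the lower ordinary-biframe bound by controlling $\|K^{\ast}x\|$ in terms of $\|x\|$. Assume $(\mathcal{X},\mathcal{Y})$ is an ordinary biframe, so there are constants $0<A\le B<\infty$ with $A\|x\|^2\le \sum_{i=1}^{\infty}\langle x,x_i\rangle\langle y_i,x\rangle\le B\|x\|^2$ for every $x\in\mathcal{H}$. The right-hand inequality already has exactly the form demanded by Definition \ref{k_biframe}, so I would simply retain $B$ as the upper $K$-biframe bound.

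For the lower bound, the key observation is the elementary operator-norm estimate $\|K^{\ast}x\|\le\|K^{\ast}\|\,\|x\|=\|K\|\,\|x\|$, valid for all $x\in\mathcal{H}$. Squaring and rearranging gives $\|x\|^2\ge\|K\|^{-2}\|K^{\ast}x\|^2$, and substituting this into the lower ordinary-biframe inequality produces $\frac{A}{\|K\|^2}\|K^{\ast}x\|^2\le A\|x\|^2\le\sum_{i=1}^{\infty}\langle x,x_i\rangle\langle y_i,x\rangle$. Thus $A':=A/\|K\|^2$ is the natural candidate for the lower $K$-biframe bound, and combining it with the retained upper bound yields $\frac{A}{\|K\|^2}\|K^{\ast}x\|^2\le\sum_{i=1}^{\infty}\langle x,x_i\rangle\langle y_i,x\rangle\le B\|x\|^2$ for all $x\in\mathcal{H}$.

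What remains is to verify that $(A',B)$ is an admissible pair of bounds, i.e. $0<A'\le B<\infty$, and this is the one place the hypothesis $\|K\|\ge1$ is used: since $\|K\|^2\ge1$ we get $A'=A/\|K\|^2\le A\le B$, while $A'>0$ is clear from $A>0$ and $K\neq0$. The computations are entirely routine, so I expect no genuine obstacle beyond bookkeeping of constants; the only subtle point is precisely this ordering $A'\le B$, which the present argument would not guarantee if $\|K\|<1$ (there one would instead have to replace the lower bound by $\min\{A/\|K\|^2,B\}$ to stay within the definition).
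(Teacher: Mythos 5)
Your proof is correct and follows essentially the same route as the paper: retain the upper bound $B$ and use $\|K^{\ast}x\|^2\le\|K\|^2\|x\|^2$ to obtain the lower bound $A/\|K\|^2$. In fact you are slightly more precise than the paper about the role of the hypothesis $\|K\|\ge 1$ --- the paper invokes it to justify $\frac{1}{\|K\|^2}\|K^{\ast}x\|^2\le\|x\|^2$ (which actually holds for any nonzero $K$), whereas you correctly identify that it is only needed to guarantee the ordering $A/\|K\|^2\le B$ of the bounds.
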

\begin{proof}
Suppose that $(\mathcal{X}, \mathcal{Y})_{K}=\left(\left\{x_i\right\}_{i=1}^{\infty},\left\{y_i\right\}_{i=1}^{\infty}\right)$
  is biframe for $\mathcal{H}$. Then there exist two constants $0<A \leq B<\infty$ such that
$$
A\left\| x\right\|^2 \leq \sum_{i=1}^{\infty}\left\langle x, x_i\right\rangle\left\langle y_i, x\right\rangle \leq B\|x\|^2, \text { for all } x \in \mathcal{H} .
$$
For $K \in \mathcal{B}(\mathcal{H})$, we have 
$$\left\|K^{\ast} x\right\|^{2} \leq\|K\|^{2}\|x\|^{2}, \quad \forall x \in \mathcal{H}.$$ Since $\|K\| \geq 1$, we obtain $$\frac{1}{\|K\|^2}\left\|K^{\ast} x\right\|^2 \leq\|x\|^2\quad \forall x \in \mathcal{H}.$$ Therefore
$$
\frac{A}{\|K\|^2}\left\|K^{\ast} x\right\|^2 \leq A\|x\|^2 \leq \sum_{i=1}^{\infty}\left\langle x, x_i\right\rangle\left\langle y_i, x\right\rangle \leq B\|x\|^2, \text { for all } x \in \mathcal{H} \text {. }
$$

Therefore $(\mathcal{X}, \mathcal{Y})_{K}$ is a $K$-biframe for $\mathcal{H}$.
\end{proof}
\begin{theorem}\label{Th3.7}
Let $(\mathcal{X}, \mathcal{Y})_{K}=\left(\left\{x_i\right\}_{i=1}^{\infty},\left\{y_i\right\}_{i=1}^{\infty}\right)$ be a biframe for $\mathcal{H}$. Then 
$ (\mathcal{X}, \mathcal{Y})_{K}$ is a $K$-biframe for $\mathcal{H}$, if and only if there exists $A>0$ such that $S_{\mathcal{X}, \mathcal{Y}} \geq A K K^{\ast}$, where $S_{\mathcal{X}, \mathcal{Y}}$ is the biframe operator for $(\mathcal{X}, \mathcal{Y})_{K}$.
\end{theorem}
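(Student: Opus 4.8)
The plan is to translate the defining inequalities of a $K$-biframe into a single operator inequality via the quadratic form of the biframe operator. The starting observation is that for every $x \in \mathcal{H}$ the sum appearing in Definition \ref{k_biframe} is exactly the quadratic form of $S_{\mathcal{X},\mathcal{Y}}$, namely $\langle S_{\mathcal{X},\mathcal{Y}} x, x\rangle = \sum_{i=1}^{\infty} \langle x, x_i\rangle \langle y_i, x\rangle$. Alongside this I would rewrite the other quantity as $\|K^{\ast} x\|^2 = \langle K K^{\ast} x, x\rangle$. With these two identities the entire statement reduces to a comparison of two quadratic forms, and both implications then fall out by reading the chain of inequalities in the two directions.

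For the forward implication, assuming $(\mathcal{X},\mathcal{Y})_{K}$ is a $K$-biframe with lower bound $A$, the lower inequality reads $A\langle K K^{\ast} x, x\rangle \le \langle S_{\mathcal{X},\mathcal{Y}} x, x\rangle$ for all $x$, that is $\langle (S_{\mathcal{X},\mathcal{Y}} - A K K^{\ast})x, x\rangle \ge 0$. Since this holds for every $x \in \mathcal{H}$, it is precisely the operator inequality $S_{\mathcal{X},\mathcal{Y}} \ge A K K^{\ast}$, which is what we want.

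For the converse, I would start from $S_{\mathcal{X},\mathcal{Y}} \ge A K K^{\ast}$ and recover the lower $K$-biframe bound by the same computation run backwards: $A\|K^{\ast} x\|^2 = A\langle K K^{\ast} x, x\rangle \le \langle S_{\mathcal{X},\mathcal{Y}} x, x\rangle = \sum_{i=1}^{\infty} \langle x, x_i\rangle \langle y_i, x\rangle$. The upper bound is \emph{not} produced by this inequality; here I would invoke the standing hypothesis that $(\mathcal{X},\mathcal{Y})$ is already a biframe, whose own upper bound $B$ directly gives $\sum_{i=1}^{\infty} \langle x, x_i\rangle \langle y_i, x\rangle \le B\|x\|^2$. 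Combining the two estimates yields the full pair of $K$-biframe inequalities.

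The computation is essentially routine; the only points that require care are (i) justifying that the operator inequality $S_{\mathcal{X},\mathcal{Y}} \ge A K K^{\ast}$ is meaningful, which rests on $S_{\mathcal{X},\mathcal{Y}}$ being self-adjoint --- this holds because its quadratic form $\sum_{i=1}^{\infty} \langle x, x_i\rangle \langle y_i, x\rangle$ is real-valued on a complex Hilbert space, being squeezed between the real quantities $A\|x\|^2$ and $B\|x\|^2$ coming from the biframe hypothesis --- and (ii) keeping track of the fact that, in the converse direction, the upper bound is supplied by the biframe assumption rather than by the operator inequality itself. I expect point (i) to be the only genuinely delicate step, and everything else to be a direct reading of the quadratic-form identity.
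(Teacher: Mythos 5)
Your proposal is correct and follows essentially the same route as the paper: both reduce the statement to the quadratic-form identity $\langle S_{\mathcal{X},\mathcal{Y}}x,x\rangle = \sum_{i=1}^{\infty}\langle x,x_i\rangle\langle y_i,x\rangle$ and read the $K$-biframe inequalities as $\langle A K K^{\ast}x,x\rangle \leq \langle S_{\mathcal{X},\mathcal{Y}}x,x\rangle \leq \langle Bx,x\rangle$ for all $x\in\mathcal{H}$. In fact you are more explicit than the paper on the two points its terse ``if and only if'' chain glosses over, namely that the upper bound in the converse direction is supplied by the standing biframe hypothesis rather than by the operator inequality, and that self-adjointness of $S_{\mathcal{X},\mathcal{Y}}$ (which follows from its real-valued quadratic form) is what makes the inequality $S_{\mathcal{X},\mathcal{Y}} \geq A K K^{\ast}$ meaningful.
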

\begin{proof}
 $(\mathcal{X}, \mathcal{Y})_{K}$ is a $K$-biframe for $\mathcal{H}$ with frame bounds $A, B$ and biframe operator $S_{\mathcal{X}, \mathcal{Y}} $, if and only if
$$
A\left\|K^{\ast} x\right\|^2 \leq \langle S_{\mathcal{X}, \mathcal{Y}} x, x\rangle =\sum_{n=1}^{\infty}\left\langle x, x_i\right\rangle\left\langle y_i, x\right\rangle\leq B\|x\|^2, \quad \forall x \in \mathcal{H},
$$
that is,
$$
\left\langle A K K^{\ast} x, x\right\rangle \leq\langle S_{\mathcal{X}, \mathcal{Y}} x, x\rangle \leq\langle B x, x\rangle, \quad \forall x \in \mathcal{H} .
$$

So the conclusion holds.
\end{proof}
\begin{theorem}
 Let $(\mathcal{X}, \mathcal{Y})_{K}=\left(\left\{x_i\right\}_{i=1}^{\infty},\left\{y_i\right\}_{i=1}^{\infty}\right)$ be a biframe for $\mathcal{H}$, with biframe operator $S_{\mathcal{X}, \mathcal{Y}}$ which satisfies $S_{\mathcal{X}, \mathcal{Y}}^{\frac{1}{2}^{\ast}}=S_{\mathcal{X}, \mathcal{Y}}^{\frac{1}{2}}$. Then $(\mathcal{X}, \mathcal{Y})_{K}$ is a $K$ biframe for $\mathcal{H}$ if and only if $K=S_{\mathcal{X}, \mathcal{Y}}^{\frac{1}{2}} U$, for some $U \in \mathcal{B}(\mathcal{H})$.
\end{theorem}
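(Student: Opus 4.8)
The plan is to derive the stated equivalence by combining Theorem \ref{Th3.7} with the Douglas factorization theorem (Theorem \ref{Doglas th}). Write $S := S_{\mathcal{X},\mathcal{Y}}$ for brevity. The first observation is that since $(\mathcal{X},\mathcal{Y})$ is already a biframe, the upper estimate $\langle S x, x\rangle \le B\|x\|^2$ holds automatically; hence $(\mathcal{X},\mathcal{Y})_K$ being a $K$-biframe amounts precisely to the existence of a constant $A > 0$ with $A\|K^\ast x\|^2 \le \langle S x, x\rangle$ for all $x \in \mathcal{H}$. By Theorem \ref{Th3.7} this lower bound is equivalent to the operator inequality $S \ge A K K^\ast$ for some $A > 0$.

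Next I would rewrite this inequality in the form demanded by Douglas's theorem. The hypothesis $(S^{1/2})^\ast = S^{1/2}$ gives $S = S^{1/2}(S^{1/2})^\ast$, so $S \ge A K K^\ast$ is the same as $K K^\ast \le \lambda^2 S^{1/2}(S^{1/2})^\ast$ with $\lambda^2 = 1/A$. This is exactly condition (2) of Theorem \ref{Doglas th} applied with $\mathcal{T}_1 = K$ and $\mathcal{T}_2 = S^{1/2}$.

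I would then invoke the equivalence of conditions (2) and (3) in Theorem \ref{Doglas th}: the inequality $K K^\ast \le \lambda^2 S^{1/2}(S^{1/2})^\ast$ for some $\lambda \ge 0$ holds if and only if $K = S^{1/2} U$ for some $U \in \mathcal{B}(\mathcal{H})$. Chaining the three equivalences yields that $(\mathcal{X},\mathcal{Y})_K$ is a $K$-biframe if and only if $K = S^{1/2} U$, as claimed. For the converse direction one begins with $K = S^{1/2} U$, extracts $\lambda \ge 0$ from Douglas's theorem, and takes $A = 1/\lambda^2$; the only delicate point is the degenerate case $\lambda = 0$, which forces $K K^\ast = 0$, hence $K = 0$, so that the lower bound holds trivially for every $A > 0$.

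I do not expect a genuine obstacle here, since both ingredients are already in hand. The one step requiring care is the bookkeeping of the constants ($A \leftrightarrow 1/\lambda^2$) together with the role of the self-adjointness hypothesis $(S^{1/2})^\ast = S^{1/2}$: this is exactly what permits the identification $S^{1/2}(S^{1/2})^\ast = S$ and thereby aligns the operator inequality $S \ge A K K^\ast$ with the term $\mathcal{T}_2 \mathcal{T}_2^\ast$ appearing in Douglas's theorem.
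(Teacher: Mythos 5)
Your proposal is correct and follows essentially the same route as the paper's own proof: translate the $K$-biframe property into the operator inequality $S_{\mathcal{X},\mathcal{Y}} \geq A K K^{\ast}$ via Theorem \ref{Th3.7}, use the hypothesis $S_{\mathcal{X},\mathcal{Y}}^{\frac{1}{2}^{\ast}}=S_{\mathcal{X},\mathcal{Y}}^{\frac{1}{2}}$ to write $S_{\mathcal{X},\mathcal{Y}} = S_{\mathcal{X},\mathcal{Y}}^{\frac{1}{2}}\bigl(S_{\mathcal{X},\mathcal{Y}}^{\frac{1}{2}}\bigr)^{\ast}$, and apply the equivalence of conditions (2) and (3) in Theorem \ref{Doglas th} with $\mathcal{T}_1 = K$, $\mathcal{T}_2 = S_{\mathcal{X},\mathcal{Y}}^{\frac{1}{2}}$. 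In fact your bookkeeping of the constants and your treatment of the degenerate case $\lambda = 0$ are more careful than the paper's, which is slightly loose at exactly those points.
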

\begin{proof}
Assume that  $(\mathcal{X}, \mathcal{Y})_{K}$ is a $K$-biframe, by Theorem \ref{Th3.7},there exists $A>0$ such that
$$
A K K^{\ast} \leq S_{\mathcal{X}, \mathcal{Y}}^{\frac{1}{2}} S_{\mathcal{X}, \mathcal{Y}}^{\frac{1}{2}^{\ast}} .
$$

Then for each $x \in \mathcal{H}$,
$$\left\|K^{\ast} x\right\|^2 \leq \lambda^{-1}\left\|S_{\mathcal{X}, \mathcal{Y}}^{\frac{1}{2}^{\ast}} x\right\|^2.$$ Therefore Theorem \ref{Doglas th}, $K=S_{\mathcal{X}, \mathcal{Y}}^{\frac{1}{2}} U$, for some $U \in \mathcal{B}(\mathcal{H})$.

Conversely, let $K=S_{\mathcal{X}, \mathcal{Y}}^{\frac{1}{2}} W$, for some $W \in \mathcal{B}(\mathcal{H})$. Then by Theorem \ref{Doglas th},there is a positive number $\mu$ such that
$$
\left\|K^{\ast} x\right\| \leq \mu\left\|S_{\mathcal{X}, \mathcal{Y}}^{\frac{1}{2}} x\right\|, \text { for all } x \in \mathcal{H}
$$
which implies that $$
\mu K K^{\ast} \leq S_{\mathcal{X}, \mathcal{Y}}^{\frac{1}{2}} S_{\mathcal{X}, \mathcal{Y}}^{\frac{1}{2}^{\ast}} .
$$ Since $S_{\mathcal{X}, \mathcal{Y}}^{\frac{1}{2}^{\ast}}=S_{\mathcal{X}, \mathcal{Y}}^{\frac{1}{2}}$ Then by Theorem \ref{Doglas th}, $(\mathcal{X}, \mathcal{Y})_{K}$ is a $K$-biframe for $\mathcal{H}$.
\end{proof}
\begin{example}
 Let $\left\{e_i\right\}_{i=1}^{\infty}$ be an orthonormal basis in $\ell_2$. Define operators $\mathcal{T}$ and $K$ on $\ell_2$ as follows: $\mathcal{T} e_i=e{i-1}$ for $i>1$, $\mathcal{T} e_1=0$, and $K e_i=e_{i+1}$. It is evident that $(\left\{\frac{1}{2}K e_i\right\}_{i=1}^{\infty}, \left\{2K e_i\right\}_{i=1}^{\infty})$ is a $K$-biframe for $\ell_2$. Assume that $(\left\{\frac{1}{2}K e_i\right\}_{i=1}^{\infty}, \left\{2K e_i\right\}_{i=1}^{\infty})$ is a $\mathcal{T}$-biframe. Then by Theorem \ref{Th3.7}, there exists $\lambda>0$ such that $K K^* \geq \lambda \mathcal{T} \mathcal{T}^{\ast}$. Consequently, by Theorem \ref{Doglas th}, $\mathcal{R}(\mathcal{T}) \subseteq \mathcal{R}(K)$.  However, this contradicts $R(\mathcal{T}) \nsubseteq \mathcal{R}(K)$ since $e_1 \in R(\mathcal{T})$ but $e_1 \notin \mathcal{R}(K)$.
\end{example}
In the following proposition,  we establish a necessary condition for the operator $\mathcal{T}$ for which $ (\mathcal{X}, \mathcal{Y})_{K}$  will be $\mathcal{T}$-biframe for $\mathcal{H}$.

 \begin{proposition}
   Let $(\mathcal{X}, \mathcal{Y})_{K}=\left(\left\{x_i\right\}_{i=1}^{\infty},\left\{y_i\right\}_{i=1}^{\infty}\right)$ be a $K$-biframe for $\mathcal{H}$. Let $\mathcal{T} \in \mathcal{B}(\mathcal{H})$ with $R(\mathcal{T}) \subseteq$ $\mathcal{R}(K)$. Then $(\mathcal{X}, \mathcal{Y})_{K}$ is a $\mathcal{T}$-biframe for $\mathcal{H}$.
\end{proposition}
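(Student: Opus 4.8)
The plan is to keep the upper bound unchanged and manufacture a new lower bound from the hypothesis $\mathcal{R}(\mathcal{T}) \subseteq \mathcal{R}(K)$ via the Douglas factorization theorem (Theorem \ref{Doglas th}). Since $(\mathcal{X}, \mathcal{Y})_{K}$ is a $K$-biframe, there are constants $0 < A \leq B < \infty$ with
$$
A\|K^{\ast} x\|^2 \leq \sum_{i=1}^{\infty}\langle x, x_i\rangle\langle y_i, x\rangle \leq B\|x\|^2, \quad \forall x \in \mathcal{H}.
$$
The right-hand inequality already has exactly the form required of a $\mathcal{T}$-biframe upper bound, so all the work lies in producing a positive constant $A'$ for which $A'\|\mathcal{T}^{\ast} x\|^2$ bounds the middle sum from below.

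First I would apply Theorem \ref{Doglas th} to the pair $(\mathcal{T}, K)$: the inclusion $\mathcal{R}(\mathcal{T}) \subseteq \mathcal{R}(K)$ is equivalent to the operator inequality $\mathcal{T}\mathcal{T}^{\ast} \leq \lambda^2 K K^{\ast}$ for some $\lambda \geq 0$. Pairing this inequality with an arbitrary $x \in \mathcal{H}$ and using $\langle \mathcal{T}\mathcal{T}^{\ast} x, x\rangle = \|\mathcal{T}^{\ast} x\|^2$ and $\langle K K^{\ast} x, x\rangle = \|K^{\ast} x\|^2$ gives the pointwise comparison
$$
\|\mathcal{T}^{\ast} x\|^2 \leq \lambda^2 \|K^{\ast} x\|^2, \quad \forall x \in \mathcal{H}.
$$

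Combining this with the lower $K$-biframe bound then yields, for every $x \in \mathcal{H}$,
$$
\|\mathcal{T}^{\ast} x\|^2 \leq \lambda^2\|K^{\ast} x\|^2 \leq \frac{\lambda^2}{A}\sum_{i=1}^{\infty}\langle x, x_i\rangle\langle y_i, x\rangle,
$$
so that $(A/\lambda^2)\|\mathcal{T}^{\ast} x\|^2 \leq \sum_{i=1}^{\infty}\langle x, x_i\rangle\langle y_i, x\rangle \leq B\|x\|^2$, which is precisely the $\mathcal{T}$-biframe condition with bounds $A/\lambda^2$ and $B$. The only point demanding care is the degenerate case $\lambda = 0$: there $\mathcal{T}\mathcal{T}^{\ast} = 0$ forces $\mathcal{T} = 0$, whence $\mathcal{T}^{\ast} x = 0$ for all $x$ and the lower bound holds trivially with any positive constant. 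I do not anticipate a genuine analytic obstacle here, since Theorem \ref{Doglas th} does all the heavy lifting; the real content is simply recognizing that $\mathcal{R}(\mathcal{T}) \subseteq \mathcal{R}(K)$ is exactly the hypothesis needed to dominate $\|\mathcal{T}^{\ast} x\|$ by $\|K^{\ast} x\|$, which is the same mechanism used (in the reverse direction) in the preceding example.
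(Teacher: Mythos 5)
Your proof is correct and follows essentially the same route as the paper: invoke Douglas' theorem (Theorem \ref{Doglas th}) to turn $\mathcal{R}(\mathcal{T}) \subseteq \mathcal{R}(K)$ into the operator inequality $\mathcal{T}\mathcal{T}^{\ast} \leq \lambda^2 KK^{\ast}$, deduce $\|\mathcal{T}^{\ast}x\|^2 \leq \lambda^2\|K^{\ast}x\|^2$, and splice this into the existing lower $K$-biframe bound while keeping the upper bound untouched. Your explicit treatment of the degenerate case $\lambda = 0$ is a small point of care the paper glosses over (it simply asserts $\alpha > 0$), but it does not change the argument.
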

\begin{proof}
Suppose that $(\mathcal{X}, \mathcal{Y})_{K}=\left(\left\{x_i\right\}_{i=1}^{\infty},\left\{y_i\right\}_{i=1}^{\infty}\right)$ is a $K$-biframe for $\mathcal{H}$. Then there are positive constants $0<A \leq B<\infty$ such that
$$
A\left\|K^* x\right\|^2 \leq  \sum_{i=1}^{\infty}\left\langle x, x_i\right\rangle\left\langle y_i, x\right\rangle \leq B\|x\|^2, \text { for all } x \in \mathcal{H} .
$$

Since $R(\mathcal{T}) \subseteq \mathcal{R}(K)$, by Theorem \ref{Doglas th}, there exists $\alpha>0$ such that $\mathcal{T} \mathcal{T}^{\ast} \leq \alpha^2 K K^*$.

Hence, 
$$
\frac{A}{\alpha^2}\left\|\mathcal{T}^{\ast} x\right\|^2 \leq A\left\|K^* x\right\|^2 \leq  \sum_{i=1}^{\infty}\left\langle x, x_i\right\rangle\left\langle y_i, x\right\rangle \leq B\|x\|^2, \text { for all } x \in \mathcal{H} .
$$

Hence $(\mathcal{X}, \mathcal{Y})_{K}$ is a $\mathcal{T}$-biframe for $\mathcal{H}$.
\end{proof}

\begin{theorem}
 Let $(\mathcal{X}, \mathcal{Y})_{K}=\left(\left\{x_i\right\}_{i=1}^{\infty},\left\{y_i\right\}_{i=1}^{\infty}\right)$ be a $K$-biframe for $\mathcal{H}$ with biframe operator $S_{\mathcal{X}, \mathcal{Y}}$ and let $\mathcal{T}$ be a positive operator. Then $(\mathcal{X}+\mathcal{T}\mathcal{X}, \mathcal{Y}+\mathcal{T}\mathcal{Y})_{K}=\left(\left\{x_i+\mathcal{T}x_i\right\}_{i=1}^{\infty},\left\{y_i+\mathcal{T}y_i\right\}_{i=1}^{\infty}\right)$ is a $K$-biframe. 
 
 Moreover for any natural number $n,\left(\left\{x_i+\mathcal{T}^{n}x_i\right\}_{i=1}^{\infty},\left\{y_i+\mathcal{T}^{n}y_i\right\}_{i=1}^{\infty}\right)$ is a $K$-biframe for $\mathcal{H}$.
\end{theorem}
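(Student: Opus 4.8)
The plan is to push the new pair through the biframe operator of the original pair. Since $\mathcal{T}$ is positive it is self-adjoint, so $x_i+\mathcal{T}x_i=(I+\mathcal{T})x_i$ and $y_i+\mathcal{T}y_i=(I+\mathcal{T})y_i$, and using $\langle x,(I+\mathcal{T})x_i\rangle=\langle (I+\mathcal{T})x,x_i\rangle$ together with $\langle (I+\mathcal{T})y_i,x\rangle=\langle y_i,(I+\mathcal{T})x\rangle$, I would first establish that for every $x\in\mathcal{H}$
\[
\sum_{i=1}^{\infty}\langle x,x_i+\mathcal{T}x_i\rangle\langle y_i+\mathcal{T}y_i,x\rangle=\langle S_{\mathcal{X},\mathcal{Y}}(I+\mathcal{T})x,(I+\mathcal{T})x\rangle .
\]
Equivalently, the biframe operator of the new pair is the conjugated operator $(I+\mathcal{T})S_{\mathcal{X},\mathcal{Y}}(I+\mathcal{T})$. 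I would also record that $\mathcal{T}\geq 0$ forces $I+\mathcal{T}\geq I$, so that $I+\mathcal{T}$ is a positive, boundedly invertible operator with $\|I+\mathcal{T}\|\leq 1+\|\mathcal{T}\|$.

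The upper bound is then routine: applying the upper $K$-biframe estimate $\langle S_{\mathcal{X},\mathcal{Y}}z,z\rangle\leq B\|z\|^2$ with $z=(I+\mathcal{T})x$ gives
\[
\langle S_{\mathcal{X},\mathcal{Y}}(I+\mathcal{T})x,(I+\mathcal{T})x\rangle\leq B\|(I+\mathcal{T})x\|^2\leq B(1+\|\mathcal{T}\|)^2\|x\|^2,
\]
so $B'=B(1+\|\mathcal{T}\|)^2$ serves as an upper bound for the new pair.

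The lower bound is the crux. Feeding $z=(I+\mathcal{T})x$ into the lower $K$-biframe estimate only yields
\[
\langle S_{\mathcal{X},\mathcal{Y}}(I+\mathcal{T})x,(I+\mathcal{T})x\rangle\geq A\|K^{\ast}(I+\mathcal{T})x\|^2,
\]
i.e., at the operator level $(I+\mathcal{T})S_{\mathcal{X},\mathcal{Y}}(I+\mathcal{T})\geq A\,(I+\mathcal{T})KK^{\ast}(I+\mathcal{T})$. To finish I must pass from $\|K^{\ast}(I+\mathcal{T})x\|$ back to $\|K^{\ast}x\|$, that is, produce $c>0$ with $(I+\mathcal{T})KK^{\ast}(I+\mathcal{T})\geq c\,KK^{\ast}$. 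This is exactly the step I expect to be the main obstacle, since conjugating a positive operator by $I+\mathcal{T}$ (even with $I+\mathcal{T}\geq I$) need not dominate the original operator when $K$ and $\mathcal{T}$ fail to commute. The clean case is $\mathcal{T}$ commuting with $K$: then $K^{\ast}(I+\mathcal{T})=(I+\mathcal{T})K^{\ast}$ and $I+\mathcal{T}\geq I$ give $\|K^{\ast}(I+\mathcal{T})x\|\geq\|K^{\ast}x\|$, so $A'=A$ works. I would therefore check whether a commutativity (or range) hypothesis on $\mathcal{T}$ is tacitly needed, and otherwise estimate $\|K^{\ast}(I+\mathcal{T})x\|$ from below directly.

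Finally, for the statement with $\mathcal{T}^{n}$ I would note that a positive operator has positive integer powers $\mathcal{T}^{n}$ (self-adjoint with non-negative spectrum), so $\mathcal{T}^{n}\geq 0$ and $I+\mathcal{T}^{n}\geq I$. The entire argument then repeats verbatim with $\mathcal{T}$ replaced by $\mathcal{T}^{n}$, the new biframe operator being $(I+\mathcal{T}^{n})S_{\mathcal{X},\mathcal{Y}}(I+\mathcal{T}^{n})$, and both the easy upper bound and the same lower-bound obstacle reappear unchanged.
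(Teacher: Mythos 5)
Your computation of the new biframe operator $(I+\mathcal{T})S_{\mathcal{X},\mathcal{Y}}(I+\mathcal{T})$ and your upper bound coincide with the paper's proof; the difference is the lower bound, which you explicitly leave open, so as written your proposal is not a complete proof. But the obstacle you isolated is genuine, and it is exactly where the paper's own argument breaks. The paper closes the gap by asserting that, since $\mathcal{T}\geq 0$,
\[
(I+\mathcal{T})S_{\mathcal{X},\mathcal{Y}}(I+\mathcal{T})^{\ast}
=S_{\mathcal{X},\mathcal{Y}}+S_{\mathcal{X},\mathcal{Y}}\mathcal{T}^{\ast}+\mathcal{T}S_{\mathcal{X},\mathcal{Y}}+\mathcal{T}S_{\mathcal{X},\mathcal{Y}}\mathcal{T}^{\ast}\geq S_{\mathcal{X},\mathcal{Y}}\geq m\,KK^{\ast},
\]
and then invokes Theorem 3.7. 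The claimed inequality amounts to $S\mathcal{T}+\mathcal{T}S+\mathcal{T}S\mathcal{T}\geq 0$ for positive $S,\mathcal{T}$, which is false when they do not commute: for $S=\left(\begin{smallmatrix}1&0\\0&0\end{smallmatrix}\right)$ and $\mathcal{T}=\left(\begin{smallmatrix}1&1\\1&1\end{smallmatrix}\right)$ one gets $(I+\mathcal{T})S(I+\mathcal{T})-S=\left(\begin{smallmatrix}3&2\\2&1\end{smallmatrix}\right)$, which has determinant $-1$ and is not positive semidefinite. So the paper silently commits the very step you refused to take on faith.

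Moreover, no argument could fill this gap, because the theorem as stated is false. Take $\mathcal{H}=\mathbb{C}^{2}$, let $K$ be the orthogonal projection onto $\mathrm{span}\{e_{1}\}$, set $x_{1}=y_{1}=e_{1}$ and $x_{i}=y_{i}=0$ for $i\geq 2$, and take $\mathcal{T}=\left(\begin{smallmatrix}1&1\\1&1\end{smallmatrix}\right)\geq 0$. Then $\sum_{i}\langle x,x_{i}\rangle\langle y_{i},x\rangle=|\langle x,e_{1}\rangle|^{2}=\|K^{\ast}x\|^{2}$, so $(\mathcal{X},\mathcal{Y})_{K}$ is a $K$-biframe with bounds $A=B=1$; but the perturbed pair gives $\sum_{i}\langle x,(I+\mathcal{T})x_{i}\rangle\langle(I+\mathcal{T})y_{i},x\rangle=|\langle x,(2,1)\rangle|^{2}$, which vanishes at $x=(1,-2)$ even though $\|K^{\ast}x\|=1$, so no lower bound $A'>0$ can exist. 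Your instinct that a compatibility hypothesis is tacitly needed is therefore exactly right: if one adds $\mathcal{T}K=K\mathcal{T}$ (an assumption the paper does impose in its other perturbation theorems), then $K^{\ast}(I+\mathcal{T})=(I+\mathcal{T})K^{\ast}$ and $I+\mathcal{T}\geq I$ yield $\|K^{\ast}(I+\mathcal{T})x\|\geq\|K^{\ast}x\|$, and your argument closes with $A'=A$; the same works with $\mathcal{T}$ replaced by $\mathcal{T}^{n}$. Alternatively, strengthening the hypothesis so that $(\mathcal{X},\mathcal{Y})$ is an ordinary biframe, i.e.\ $S_{\mathcal{X},\mathcal{Y}}\geq A I$, also rescues the statement, since then $(I+\mathcal{T})S_{\mathcal{X},\mathcal{Y}}(I+\mathcal{T})\geq A(I+\mathcal{T})^{2}\geq A I\geq \frac{A}{\|K\|^{2}}KK^{\ast}$.
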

\begin{proof}
Suppose that $(\mathcal{X}, \mathcal{Y})_{K}=\left(\left\{x_i\right\}_{i=1}^{\infty},\left\{y_i\right\}_{i=1}^{\infty}\right)$ is a $K$-biframe for $\mathcal{H}$. Then by Theorem \ref{Th3.7} , there exists $m>0$ such that $S_{\mathcal{X}, \mathcal{Y}} \geq m K K^*$. For every $x \in \mathcal{H}$, we have
$$
\begin{aligned}
S_{(\mathcal{X}+\mathcal{T}\mathcal{X}),( \mathcal{Y}+\mathcal{T}\mathcal{Y})}&=\sum_{i=1}^{\infty}\left\langle x,\left(x_i+\mathcal{T} x_i\right)\right\rangle\left(y_i+\mathcal{T} y_i\right)\\
 & =(I+\mathcal{T}) \sum_{i=1}^{\infty}\left\langle x,(I+\mathcal{T}) x_i\right\rangle y_i \\
 &=(I+\mathcal{T}) \sum_{i=1}^{\infty}\left\langle (I+\mathcal{T})^{\ast}x, x_i\right\rangle y_i\\
& =(I+\mathcal{T}) S_{\mathcal{X},\mathcal{Y}}(I+\mathcal{T})^{\ast} x .
\end{aligned}
$$
Hence the frame operator for $(\mathcal{X}+\mathcal{T}X, \mathcal{Y}+\mathcal{T}Y)_{K}$ is $(I+\mathcal{T}) S_{\mathcal{X},\mathcal{Y}}(I+\mathcal{T})^{\ast}$.  
Since $\mathcal{T}$ is positive operator we get,
$$
(I+\mathcal{T})  S_{\mathcal{X},\mathcal{Y}}(I+\mathcal{T})^{\ast}= S_{\mathcal{X},\mathcal{Y}}+ S_{\mathcal{X},\mathcal{Y}} \mathcal{T}^{\ast}+\mathcal{T}  S_{\mathcal{X},\mathcal{Y}}+\mathcal{T}  S_{\mathcal{X},\mathcal{Y}} \mathcal{T}^{\ast} \geq  S_{\mathcal{X},\mathcal{Y}} \geq m K K^*,
$$
Once again, applying Theorem \ref{Th3.7}, we can conclude that $(\mathcal{X}+\mathcal{T}\mathcal{X}, \mathcal{Y}+\mathcal{T}\mathcal{Y})_{K}$ is a $K$-biframe for $\mathcal{H}$.

Now, for any natural number $n$, the frame operator for
 $$ S_{(\mathcal{X}+\mathcal{T}^{n}\mathcal{X}),( \mathcal{Y}+\mathcal{T}^{n}\mathcal{Y})}=\left(I+\mathcal{T}^{n}\right) S_{\mathcal{X},\mathcal{Y}}(I+\left.\mathcal{T}^{n}\right)^{\ast} \geq S_{\mathcal{X},\mathcal{Y}}. $$ Hence $n,\left(\left\{x_i+\mathcal{T}^{n}x_i\right\}_{i=1}^{\infty},\left\{y_i+\mathcal{T}^{n}y_i\right\}_{i=1}^{\infty}\right)$ is a $K$-biframe for $\mathcal{H}$.
\end{proof}
\section{Operators on $K$-biframes for $\mathcal{H}$}
\begin{example}  Let $\mathcal{H}=\mathbb{C}^4$ and $\left\{e_1, e_2, e_3, e_4\right\}$ be an orthonormal basis for $\mathcal{H}$. Define $K: \mathcal{H} \rightarrow \mathcal{H}$ by $K e_1=e_1, K e_2=e_1, K e_3=e_2, K e_4=e_3$. We consider two sequences $\mathcal{X}=\left\{x_i\right\}_{i=1}^{4}$ and $\mathcal{Y}=\left\{y_i\right\}_{i=1}^{4}$ defined as follows: 
 $$\mathcal{X}=\left\{e_1, e_1, 2e_2,3e_3\right\}$$
 And
  $$\mathcal{Y}=\left\{e_1, e_1, \frac{1}{2}e_2,\frac{1}{3}e_3\right\}.$$
  Then $(\mathcal{X}, \mathcal{Y})_{K}$  is a $K$-biframe for $\mathcal{H}$ with the frame operator $$ S_{\mathcal{X},\mathcal{Y}}=\left(\begin{array}{llll}2 & 0 & 0 & 0 \\ 0 & 1 & 0 & 0 \\ 0 & 0 & 1& 0 \\ 0 & 0 & 0& 0\end{array}\right).$$ This operator is not invertible, (because $\det (S_{\mathcal{X},\mathcal{Y}})=0$).
\end{example}
\begin{theorem}
 Suppose $K \in \mathcal{B}(\mathcal{H})$ has a closed range. The biframe operator of a $K$-biframe is invertible on the subspace $\mathcal{R}(K)$ of $\mathcal{H}$.
\end{theorem}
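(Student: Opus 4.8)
The plan is to read off from the lower $K$-biframe inequality a genuine lower bound for the biframe operator on $\mathcal{R}(K)$, the closed-range hypothesis on $K$ being precisely what lets us bound $\|K^{\ast}x\|$ from below there. First I would record that the middle term of the defining inequality is the quadratic form of $S_{\mathcal{X},\mathcal{Y}}$: indeed $\sum_{i=1}^{\infty}\langle x,x_i\rangle\langle y_i,x\rangle=\langle S_{\mathcal{X},\mathcal{Y}}x,x\rangle$. In particular this number is real and nonnegative for every $x\in\mathcal{H}$, so $S_{\mathcal{X},\mathcal{Y}}$ is a bounded, positive, self-adjoint operator (a bounded operator on a complex Hilbert space with real quadratic form is self-adjoint, by polarization).

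The key step is to exploit the closed range of $K$. Since $K$ has closed range, its pseudo-inverse $K^{+}$ exists and satisfies $KK^{+}=P_{\mathcal{R}(K)}$, the orthogonal projection onto $\mathcal{R}(K)$; taking adjoints gives $(K^{+})^{\ast}K^{\ast}=P_{\mathcal{R}(K)}$. Hence for every $x\in\mathcal{R}(K)$ we have $x=(K^{+})^{\ast}K^{\ast}x$, so
$$\|x\|=\|(K^{+})^{\ast}K^{\ast}x\|\le\|K^{+}\|\,\|K^{\ast}x\|,$$
that is $\|K^{\ast}x\|\ge\|K^{+}\|^{-1}\|x\|$ on $\mathcal{R}(K)$. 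Equivalently, one may apply the characterization of injective closed-range operators to $K^{\ast}$, which is injective and bounded below on $\mathcal{R}(K)=\mathcal{N}(K^{\ast})^{\perp}$.

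Combining this with the lower biframe bound, for every $x\in\mathcal{R}(K)$,
$$\langle S_{\mathcal{X},\mathcal{Y}}x,x\rangle\ge A\|K^{\ast}x\|^{2}\ge\frac{A}{\|K^{+}\|^{2}}\|x\|^{2}.$$
By the Cauchy–Schwarz inequality $\langle S_{\mathcal{X},\mathcal{Y}}x,x\rangle\le\|S_{\mathcal{X},\mathcal{Y}}x\|\,\|x\|$, whence $\|S_{\mathcal{X},\mathcal{Y}}x\|\ge A\|K^{+}\|^{-2}\|x\|$ for all $x\in\mathcal{R}(K)$. Thus $S_{\mathcal{X},\mathcal{Y}}$ is bounded below on $\mathcal{R}(K)$, so its restriction is injective with closed range and hence invertible onto its image; equivalently, the compression $P_{\mathcal{R}(K)}S_{\mathcal{X},\mathcal{Y}}\big|_{\mathcal{R}(K)}$ is an invertible positive operator on the Hilbert space $\mathcal{R}(K)$, which is the asserted invertibility on $\mathcal{R}(K)$.

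I expect the main obstacle to be exactly the lower estimate for $\|K^{\ast}x\|$ on $\mathcal{R}(K)$: this is where the closed-range hypothesis is indispensable, since without it $K^{\ast}$ need not be bounded below on $\mathcal{R}(K)$. A secondary point to state carefully is the meaning of \emph{invertible on $\mathcal{R}(K)$}, because $S_{\mathcal{X},\mathcal{Y}}$ need not leave $\mathcal{R}(K)$ invariant; the bounded-below formulation, or compression by $P_{\mathcal{R}(K)}$, is the clean way to phrase it. Example 4.1 already illustrates the phenomenon, where $S_{\mathcal{X},\mathcal{Y}}=\operatorname{diag}(2,1,1,0)$ fails to be invertible on $\mathcal{H}$ yet is invertible on $\mathcal{R}(K)=\operatorname{span}\{e_1,e_2,e_3\}$.
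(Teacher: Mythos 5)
Your proposal is correct and follows essentially the same route as the paper's own proof: both use the pseudo-inverse $K^{+}$ to obtain $\|K^{\ast}x\|\geq\|K^{+}\|^{-1}\|x\|$ on $\mathcal{R}(K)$, feed this into the lower $K$-biframe bound, and conclude that $S_{\mathcal{X},\mathcal{Y}}$ is bounded below there. If anything, your write-up is more careful than the paper's, since you make explicit the Cauchy--Schwarz passage from the quadratic-form estimate $\langle S_{\mathcal{X},\mathcal{Y}}x,x\rangle\geq A\|K^{+}\|^{-2}\|x\|^{2}$ to the norm bound, and you address the fact that $S_{\mathcal{X},\mathcal{Y}}$ need not leave $\mathcal{R}(K)$ invariant (so invertibility should be read as invertibility of the compression $P_{\mathcal{R}(K)}S_{\mathcal{X},\mathcal{Y}}\big|_{\mathcal{R}(K)}$), a point the paper glosses over when it writes $S_{\mathcal{X},\mathcal{Y}}\big|_{\mathcal{R}(K)}:\mathcal{R}(K)\rightarrow R(S)$.
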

\begin{proof}
 Assume that $(\mathcal{X}, \mathcal{Y})_{K}=\left(\left\{x_i\right\}_{i=1}^{\infty},\left\{y_i\right\}_{i=1}^{\infty}\right)$ be a $K$-biframe for $\mathcal{H}$.  Then there are positive constants $0<A \leq B<\infty$ such that
$$
A\left\|K^* x\right\|^2 \leq  \sum_{i=1}^{\infty}\left\langle x, x_i\right\rangle\left\langle y_i, x\right\rangle \leq B\|x\|^2, \text { for all } x \in \mathcal{H} .
$$

Since $\mathcal{R}(K)$ is closed, then $K K^{+} x=x$, for all $x \in \mathcal{R}(K)$. That is,
$$
\left.K K^{+}\right|_{\mathcal{R}(K)}=I_{\mathcal{R}(K)},
$$
we have $I_{\mathcal{R}(K)}^{\ast}=\left(\left.K^{+}\right|_{\mathcal{R}(K)}\right)^{\ast} K^*$.
For any $x \in \mathcal{R}(K)$, we obtain
$$
\|x\|^{2}=\left\|\left(\left.K K^{+}\right|_{\mathcal{R}(K)}\right)^{\ast} x \right\|^{2}=\left\|\left(\left.K^{+}\right|_{\mathcal{R}(K)}\right)^{\ast} K^* x\right\| \leq\left\|K^{+}\right\|^{2} \cdot\left\|K^* x\right\|^{2}.
$$
Thus $$\left\|K^* x\right\|^2 \geq\left\|K^{+}\right\|^{-2}\|x\|^2.$$ So, we have
$$
\sum_{i=1}^{\infty}\left\langle x, x_i\right\rangle\left\langle y_i, x\right\rangle \geq A\left\|K^* x\right\|^2 \geq A\left\|K^{+}\right\|^{-2}\|x\|^2, \text { for all } x \in \mathcal{R}(K) .
$$

Therefore, based on the definition of a $K$-biframe, we have
$$
A\left\|K^{+}\right\|^{-2}\|x\|^2 \leq \sum_{i=1}^{\infty}\left\langle x, x_i\right\rangle\left\langle y_i, x\right\rangle \leq B\|x\|^2, \text { for all } x \in \mathcal{R}(K) .
$$

Hence
$$
A\left\|K^{+}\right\|^{-2}\|x\| \leq\|\left.S_{\mathcal{X},\mathcal{Y}}\right|_{\mathcal{R}(K)} x\| \leq B\|x\|, \text { for all } x \in \mathcal{R}(K) .
$$

Consequently, $\left.S_{\mathcal{X},\mathcal{Y}}\right|_{\mathcal{R}(K)}: \mathcal{R}(K) \rightarrow R(S)$ is a bounded linear operator and invertible on $\mathcal{R}(K)$.
\end{proof}
\begin{theorem}
 Let $K \in \mathcal{B}(\mathcal{H})$ be with a dense range. Suppose that $(\mathcal{X}, \mathcal{Y})_{K}=\left(\left\{x_i\right\}_{i=1}^{\infty},\left\{y_i\right\}_{i=1}^{\infty}\right)$ be a $K$-biframe and $\mathcal{T} \in \mathcal{B}(\mathcal{H})$ have a closed range. If $\left(\left\{\mathcal{T}x_i\right\}_{i=1}^{\infty},\left\{\mathcal{T}y_i\right\}_{i=1}^{\infty}\right)$ is a $K$-biframe for $\mathcal{H}$, then $\mathcal{T}$ is surjective.
\end{theorem}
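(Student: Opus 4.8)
The plan is to convert the two frame conditions into a single pointwise operator inequality relating $\|K^\ast x\|$ and $\|\mathcal{T}^\ast x\|$, and then to deduce surjectivity of $\mathcal{T}$ by playing the dense range of $K$ against the closed range of $\mathcal{T}$.

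First I would denote by $A,B$ the bounds of the given $K$-biframe $(\mathcal{X},\mathcal{Y})_{K}$ and by $C,D$ the bounds of the image pair $\left(\{\mathcal{T}x_i\}_{i=1}^{\infty},\{\mathcal{T}y_i\}_{i=1}^{\infty}\right)$. The key observation is the adjoint identity
$$\left\langle x,\mathcal{T}x_i\right\rangle\left\langle \mathcal{T}y_i,x\right\rangle=\left\langle \mathcal{T}^\ast x,x_i\right\rangle\left\langle y_i,\mathcal{T}^\ast x\right\rangle,\qquad x\in\mathcal{H},$$
which rewrites, term by term, the biframe sum of the image pair as the biframe sum of $(\mathcal{X},\mathcal{Y})_{K}$ evaluated at the point $\mathcal{T}^\ast x$. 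Feeding $\mathcal{T}^\ast x$ into the upper bound $B$ of the original biframe and combining it with the lower bound $C$ of the image pair gives
$$C\left\|K^\ast x\right\|^2\le \sum_{i=1}^{\infty}\left\langle \mathcal{T}^\ast x,x_i\right\rangle\left\langle y_i,\mathcal{T}^\ast x\right\rangle\le B\left\|\mathcal{T}^\ast x\right\|^2,$$
so that $C\|K^\ast x\|^2\le B\|\mathcal{T}^\ast x\|^2$ for every $x\in\mathcal{H}$.

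Next I would exploit that $K$ has dense range, which forces $\mathcal{N}(K^\ast)=\mathcal{R}(K)^\perp=\{0\}$, i.e.\ $K^\ast$ is injective. The displayed inequality then shows that $\mathcal{T}^\ast x=0$ implies $K^\ast x=0$, hence $x=0$; thus $\mathcal{T}^\ast$ is injective and $\mathcal{N}(\mathcal{T}^\ast)=\mathcal{R}(\mathcal{T})^\perp=\{0\}$. Since $\mathcal{T}$ has closed range, $\mathcal{R}(\mathcal{T})=\overline{\mathcal{R}(\mathcal{T})}=\mathcal{R}(\mathcal{T})^{\perp\perp}=\{0\}^\perp=\mathcal{H}$, and therefore $\mathcal{T}$ is surjective.

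The computation is short; the only point that genuinely needs care is the passage to the inequality $C\|K^\ast x\|^2\le B\|\mathcal{T}^\ast x\|^2$, where one must apply the adjoint identity and then substitute $\mathcal{T}^\ast x$ (not $x$) into the upper bound of the original biframe. Once that inequality is secured, the two hypotheses play clean complementary roles: the dense range of $K$ yields injectivity of $\mathcal{T}^\ast$, and the closed range of $\mathcal{T}$ upgrades the resulting density of $\mathcal{R}(\mathcal{T})$ to full surjectivity. As an alternative to the direct injectivity argument, the same inequality reads $KK^\ast\le (B/C)\,\mathcal{T}\mathcal{T}^\ast$, so Theorem \ref{Doglas th} gives $\mathcal{R}(K)\subseteq\mathcal{R}(\mathcal{T})$, and density of $\mathcal{R}(K)$ together with closedness of $\mathcal{R}(\mathcal{T})$ again forces $\mathcal{R}(\mathcal{T})=\mathcal{H}$.
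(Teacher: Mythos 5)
Your proof is correct and follows essentially the same route as the paper's: both rewrite the image-pair biframe sum via the adjoint identity $\langle x,\mathcal{T}x_i\rangle\langle\mathcal{T}y_i,x\rangle=\langle\mathcal{T}^{\ast}x,x_i\rangle\langle y_i,\mathcal{T}^{\ast}x\rangle$, deduce $\mathcal{N}(\mathcal{T}^{\ast})\subseteq\mathcal{N}(K^{\ast})=\{0\}$ from the dense range of $K$, and use the closed range of $\mathcal{T}$ to upgrade density of $\mathcal{R}(\mathcal{T})$ to surjectivity. The only cosmetic differences are that you pass through the quantitative inequality $C\left\|K^{\ast}x\right\|^2\le B\left\|\mathcal{T}^{\ast}x\right\|^2$ (the paper reads the kernel inclusion directly off the lower frame bound of the image pair, which suffices) and that you append a Douglas-theorem alternative, which the paper does not use here.
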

\begin{proof}
Suppose That $\left(\left\{\mathcal{T}x_i\right\}_{i=1}^{\infty},\left\{\mathcal{T}y_i\right\}_{i=1}^{\infty}\right)$ is a $K$-biframe for $\mathcal{H}$ with frame bounds $A$ and $B$. Then for each $x \in \mathcal{H}$,
$$
A\left\|K^* x\right\|^2 \leq  \sum_{i=1}^{\infty}\left\langle x,\mathcal{T} x_i\right\rangle\left\langle \mathcal{T}y_i, x\right\rangle  \leq B\|x\|^2 .
$$
Hence 
\begin{equation}\label{EQ41}
A\left\|K^* x\right\|^2 \leq  \sum_{i=1}^{\infty}\left\langle \mathcal{T}^{\ast}x, x_i\right\rangle\left\langle y_i, \mathcal{T}^{\ast}x\right\rangle  \leq B\|x\|^2 .
\end{equation}
Since $K$ has a dense range, $\mathcal{H}=\overline{\mathcal{R}(K)}$, so $K^*$ is injective. From (\ref{EQ41}), $\mathcal{T}^{\ast}$ is injective since $N\left(\mathcal{T}^{\ast}\right) \subseteq \textbf{N}\left(K^*\right)$. Moreover, $$\mathcal{R}(\mathcal{T})=\mathcal{N}\left(T^*\right)^{\perp}=\mathcal{H}.$$ Therefore $\mathcal{T}$ is surjective.
\end{proof}
\begin{theorem}
 Let $K \in \mathcal{B}(\mathcal{H})$ and let  $(\mathcal{X}, \mathcal{Y})_{K}=\left(\left\{x_i\right\}_{i=1}^{\infty},\left\{y_i\right\}_{i=1}^{\infty}\right)$ be a $K$-biframe for $\mathcal{H}$. If $\mathcal{T} \in \mathcal{B}(\mathcal{H})$ has a closed range with $\mathcal{T} K=K \mathcal{T}$,  then $\left(\left\{\mathcal{T}x_i\right\}_{i=1}^{\infty},\left\{\mathcal{T}y_i\right\}_{i=1}^{\infty}\right)$ is a $K$-biframe for $\mathcal{R}(\mathcal{T})$.
\end{theorem}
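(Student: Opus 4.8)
The plan is to verify the two $K$-biframe inequalities directly on the closed subspace $\mathcal{R}(\mathcal{T})$, which is itself a Hilbert space. First I would record that the commutation relation $\mathcal{T}K=K\mathcal{T}$ forces $K$ to leave $\mathcal{R}(\mathcal{T})$ invariant: if $w=\mathcal{T}u\in\mathcal{R}(\mathcal{T})$, then $Kw=K\mathcal{T}u=\mathcal{T}Ku\in\mathcal{R}(\mathcal{T})$. Hence $K|_{\mathcal{R}(\mathcal{T})}$ is a well-defined bounded operator on $\mathcal{R}(\mathcal{T})$, and its adjoint computed inside that subspace is $PK^{\ast}|_{\mathcal{R}(\mathcal{T})}$, where $P$ is the orthogonal projection of $\mathcal{H}$ onto the closed range $\mathcal{R}(\mathcal{T})$. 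Taking adjoints in the commutation relation also yields $K^{\ast}\mathcal{T}^{\ast}=\mathcal{T}^{\ast}K^{\ast}$, which is the crucial algebraic identity.

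The central computation is the reindexing via $\mathcal{T}^{\ast}$ already used in the previous theorem: for every $x\in\mathcal{R}(\mathcal{T})$,
$$\sum_{i=1}^{\infty}\langle x,\mathcal{T}x_i\rangle\langle \mathcal{T}y_i,x\rangle=\sum_{i=1}^{\infty}\langle \mathcal{T}^{\ast}x,x_i\rangle\langle y_i,\mathcal{T}^{\ast}x\rangle,$$
so the sum for $(\mathcal{T}\mathcal{X},\mathcal{T}\mathcal{Y})$ at $x$ is exactly the $K$-biframe sum for $(\mathcal{X},\mathcal{Y})_{K}$ evaluated at $\mathcal{T}^{\ast}x$. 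Applying the hypothesis gives
$$A\|K^{\ast}\mathcal{T}^{\ast}x\|^2\le\sum_{i=1}^{\infty}\langle x,\mathcal{T}x_i\rangle\langle \mathcal{T}y_i,x\rangle\le B\|\mathcal{T}^{\ast}x\|^2.$$
The upper estimate is immediate, since $\|\mathcal{T}^{\ast}x\|^2\le\|\mathcal{T}\|^2\|x\|^2$, yielding an upper biframe bound $B\|\mathcal{T}\|^2$.

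For the lower bound I would exploit the closed-range hypothesis on $\mathcal{T}$. Using $K^{\ast}\mathcal{T}^{\ast}=\mathcal{T}^{\ast}K^{\ast}$ and splitting $K^{\ast}x=PK^{\ast}x+(I-P)K^{\ast}x$, the second summand lies in $\mathcal{R}(\mathcal{T})^{\perp}=\mathcal{N}(\mathcal{T}^{\ast})$, so $\mathcal{T}^{\ast}K^{\ast}x=\mathcal{T}^{\ast}PK^{\ast}x$. Since $\mathcal{T}$ has closed range, so does $\mathcal{T}^{\ast}$, and its pseudo-inverse $(\mathcal{T}^{\ast})^{+}$ is bounded with $(\mathcal{T}^{\ast})^{+}\mathcal{T}^{\ast}z=z$ for every $z\in\mathcal{N}(\mathcal{T}^{\ast})^{\perp}=\mathcal{R}(\mathcal{T})$; this gives the lower bound $\|\mathcal{T}^{\ast}z\|\ge\|(\mathcal{T}^{\ast})^{+}\|^{-1}\|z\|$ on $\mathcal{R}(\mathcal{T})$. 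Applying it to $z=PK^{\ast}x\in\mathcal{R}(\mathcal{T})$ and noting $\|PK^{\ast}x\|=\|(K|_{\mathcal{R}(\mathcal{T})})^{\ast}x\|$ for $x\in\mathcal{R}(\mathcal{T})$, I obtain
$$\|K^{\ast}\mathcal{T}^{\ast}x\|^2=\|\mathcal{T}^{\ast}PK^{\ast}x\|^2\ge\|(\mathcal{T}^{\ast})^{+}\|^{-2}\left\|(K|_{\mathcal{R}(\mathcal{T})})^{\ast}x\right\|^2.$$
Combining the two estimates shows that $(\mathcal{T}\mathcal{X},\mathcal{T}\mathcal{Y})$ is a $K$-biframe for $\mathcal{R}(\mathcal{T})$ with bounds $A\|(\mathcal{T}^{\ast})^{+}\|^{-2}$ and $B\|\mathcal{T}\|^2$.

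The step I expect to be the main obstacle is the lower bound, specifically the correct identification of the adjoint of $K|_{\mathcal{R}(\mathcal{T})}$ as $PK^{\ast}|_{\mathcal{R}(\mathcal{T})}$: because $K^{\ast}$ need not preserve $\mathcal{R}(\mathcal{T})$, one cannot simply replace $\|(K|_{\mathcal{R}(\mathcal{T})})^{\ast}x\|$ by $\|K^{\ast}x\|$, and it is precisely the projection $P$ together with the closed-range lower bound for $\mathcal{T}^{\ast}$ that makes the argument go through. If instead one reads the conclusion with the ambient adjoint $K^{\ast}$, this subtlety is hidden and the proof is at risk of being incomplete.
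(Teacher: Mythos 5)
Your proof is correct, and up to the decisive step it runs along the same lines as the paper's own argument: both rewrite the sum for $\left(\{\mathcal{T}x_i\},\{\mathcal{T}y_i\}\right)$ at $x\in\mathcal{R}(\mathcal{T})$ as the biframe sum of $(\mathcal{X},\mathcal{Y})_{K}$ at $\mathcal{T}^{\ast}x$, both use the adjoint commutation $K^{\ast}\mathcal{T}^{\ast}=\mathcal{T}^{\ast}K^{\ast}$, both obtain the upper bound $B\|\mathcal{T}\|^{2}\|x\|^{2}$, and both extract the lower bound from the closed-range hypothesis via the pseudo-inverse. The divergence is exactly at the point you flagged, and your treatment is the sound one. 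The paper's proof opens with the identity $K^{\ast}x=\left(\mathcal{T}^{+}\right)^{\ast}\mathcal{T}^{\ast}K^{\ast}x$ for $x\in\mathcal{R}(\mathcal{T})$; since $\left(\mathcal{T}^{+}\right)^{\ast}\mathcal{T}^{\ast}=\left(\mathcal{T}\mathcal{T}^{+}\right)^{\ast}$ is the orthogonal projection $P$ onto $\mathcal{R}(\mathcal{T})$, this amounts to claiming $K^{\ast}x\in\mathcal{R}(\mathcal{T})$, which does not follow from $\mathcal{T}K=K\mathcal{T}$: that hypothesis makes $K$, not $K^{\ast}$, leave $\mathcal{R}(\mathcal{T})$ invariant (what $K^{\ast}$ preserves is $\mathcal{R}(\mathcal{T})^{\perp}=\mathcal{N}(\mathcal{T}^{\ast})$).

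The gap in the paper is real, not cosmetic. Take $\mathcal{H}=\mathbb{C}^{2}$, $\mathcal{T}e_{1}=0$, $\mathcal{T}e_{2}=e_{1}$, $K=\mathcal{T}$ (so $\mathcal{T}K=K\mathcal{T}$ and $\mathcal{R}(\mathcal{T})=\mathrm{span}\{e_{1}\}$ is closed), and $\mathcal{X}=\mathcal{Y}=\{e_{1},0,0,\dots\}$. Then $\sum_{i}\langle x,x_{i}\rangle\langle y_{i},x\rangle=|\langle x,e_{1}\rangle|^{2}=\|\mathcal{T}^{\ast}x\|^{2}$, so $(\mathcal{X},\mathcal{Y})$ is a Parseval $K$-biframe, while $\mathcal{T}\mathcal{X}=\mathcal{T}\mathcal{Y}=\{0,0,\dots\}$ gives the identically zero sum; since $K^{\ast}e_{1}=e_{2}\neq 0$, no inequality $A'\|K^{\ast}x\|^{2}\leq 0$ with $A'>0$ can hold at $x=e_{1}\in\mathcal{R}(\mathcal{T})$. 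Hence the conclusion read with the ambient adjoint $K^{\ast}$ --- which is what the paper's proof purports to establish --- is false in general, whereas your version, with $PK^{\ast}x=\left(K|_{\mathcal{R}(\mathcal{T})}\right)^{\ast}x$ (legitimate, since $K$ does leave the closed subspace $\mathcal{R}(\mathcal{T})$ invariant, so $K$ restricts to an operator on that Hilbert space), is the statement that survives: in the example it holds trivially because $K|_{\mathcal{R}(\mathcal{T})}=0$. Your derivation of it --- discard $(I-P)K^{\ast}x\in\mathcal{N}(\mathcal{T}^{\ast})$, then apply $\|\mathcal{T}^{\ast}z\|\geq\|\left(\mathcal{T}^{\ast}\right)^{+}\|^{-1}\|z\|$ for $z\in\mathcal{N}(\mathcal{T}^{\ast})^{\perp}=\mathcal{R}(\mathcal{T})$ --- is complete and correct, with bounds $A\|\left(\mathcal{T}^{\ast}\right)^{+}\|^{-2}$ and $B\|\mathcal{T}\|^{2}$.
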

\begin{proof}
 Since $\mathcal{T}\in \mathcal{B}(\mathcal{H})$ has a closed range. Then for each $x \in \mathcal{R}(\mathcal{T})$, $$K^* x=\left(\mathcal{T}^{+}\right)^{\ast} \mathcal{T}^{\ast} K^* x,$$ so we have
$$
\left\|K^* x\right\|=\left\|\left(\mathcal{T}^{+}\right)^{\ast} \mathcal{T}^{\ast} K^* x\right\| \leq\left\|\left(\mathcal{T}^{+}\right)^{\ast}\right\|\left\|\mathcal{T}^{\ast} K^* x\right\| .
$$

Hence $$\left\|\left(\mathcal{T}^{+}\right)^{\ast}\right\|^{-1}\left\|K^* x\right\| \leq\left\|\mathcal{T}^{\ast} K^* x\right\|.$$
 Since $(\mathcal{X}, \mathcal{Y})_{K}$ is a $K$-biframe with frame bounds $A, B$, then for each $x \in \mathcal{R}(\mathcal{T})$, we have
$$
\begin{aligned}
 \sum_{i=1}^{\infty}\left\langle x,\mathcal{T} x_i\right\rangle\left\langle \mathcal{T}y_i, x\right\rangle & =\sum_{i=1}^{\infty}\left\langle \mathcal{T}^{\ast}x, x_i\right\rangle\left\langle y_i, \mathcal{T}^{\ast}x\right\rangle \\ &\geq A\left\|K^* \mathcal{T}^{\ast} x\right\|^2 \\
& =A\left\|\mathcal{T}^{\ast} K^* x\right\|^2 \\
& \geq A\left\|\left(\mathcal{T}^{+}\right)^{\ast}\right\|^{-2}\left\|K^* x\right\|^2 .
\end{aligned}
$$
On the other hand, we have
$$
\sum_{i=1}^{\infty}\left\langle x,\mathcal{T} x_i\right\rangle\left\langle \mathcal{T}y_i, x\right\rangle  =\sum_{i=1}^{\infty}\left\langle \mathcal{T}^{\ast}x, x_i\right\rangle\left\langle y_i, \mathcal{T}^{\ast}x\right\rangle\leq \mu\left\|T^* x\right\|^2 \leq \mu\|T\|^2\|x\|^2 .
$$
Hence $$A\left\|\left(\mathcal{T}^{+}\right)^{\ast}\right\|^{-2}\left\|K^* x\right\|^2\leq\sum_{i=1}^{\infty}\left\langle x,\mathcal{T} x_i\right\rangle\left\langle \mathcal{T}y_i, x\right\rangle  \leq B\|\mathcal{T}\|^2\|x\|^2 .$$ 
Therefore $\left(\left\{\mathcal{T}x_i\right\}_{i=1}^{\infty},\left\{\mathcal{T}y_i\right\}_{i=1}^{\infty}\right)$ is a $K$-biframe for $\mathcal{R}(\mathcal{T})$.
\end{proof}
\begin{theorem}
Let $K \in \mathcal{B}(\mathcal{H})$ be with a dense range. Let  $(\mathcal{X}, \mathcal{Y})_{K}=\left(\left\{x_i\right\}_{i=1}^{\infty},\left\{y_i\right\}_{i=1}^{\infty}\right)$ be a $K$-biframe and suppose $\mathcal{T} \in \mathcal{B}(\mathcal{H})$ have a closed range. If $\left(\left\{\mathcal{T}x_i\right\}_{i=1}^{\infty},\left\{\mathcal{T}y_i\right\}_{i=1}^{\infty}\right)$ and $\left(\left\{\mathcal{T}^{\ast}x_i\right\}_{i=1}^{\infty},\left\{\mathcal{T}^{\ast}y_i\right\}_{i=1}^{\infty}\right)$ are $K$-biframes then $\mathcal{T}$ is invertible.
\end{theorem}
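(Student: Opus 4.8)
The plan is to show that $\mathcal{T}$ is simultaneously injective and surjective; once bijectivity of a bounded operator on a Hilbert space is established, the bounded inverse theorem guarantees that $\mathcal{T}^{-1}\in\mathcal{B}(\mathcal{H})$, so $\mathcal{T}\in\mathrm{GL}(\mathcal{H})$. The two hypotheses --- that both $\left(\left\{\mathcal{T}x_i\right\}_{i=1}^\infty,\left\{\mathcal{T}y_i\right\}_{i=1}^\infty\right)$ and $\left(\left\{\mathcal{T}^{\ast}x_i\right\}_{i=1}^\infty,\left\{\mathcal{T}^{\ast}y_i\right\}_{i=1}^\infty\right)$ are $K$-biframes --- will be exploited separately, one for each property.

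For surjectivity I would simply invoke the preceding theorem: since $K$ has dense range, $\mathcal{T}$ has closed range, and $\left(\left\{\mathcal{T}x_i\right\}_{i=1}^\infty,\left\{\mathcal{T}y_i\right\}_{i=1}^\infty\right)$ is a $K$-biframe, that result already yields that $\mathcal{T}$ is surjective. Recall that its argument shows $\mathcal{N}(\mathcal{T}^{\ast})=\{0\}$ and then concludes $\mathcal{R}(\mathcal{T})=\overline{\mathcal{R}(\mathcal{T})}=\mathcal{N}(\mathcal{T}^{\ast})^{\perp}=\mathcal{H}$, the closure being superfluous precisely because $\mathcal{T}$ has closed range.

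For injectivity I would run the parallel argument on the second biframe. Using $\left\langle x,\mathcal{T}^{\ast}x_i\right\rangle=\left\langle \mathcal{T}x,x_i\right\rangle$ and $\left\langle \mathcal{T}^{\ast}y_i,x\right\rangle=\left\langle y_i,\mathcal{T}x\right\rangle$, the lower $K$-biframe bound for $\left(\left\{\mathcal{T}^{\ast}x_i\right\}_{i=1}^\infty,\left\{\mathcal{T}^{\ast}y_i\right\}_{i=1}^\infty\right)$ becomes
$$
A\left\|K^{\ast}x\right\|^2 \leq \sum_{i=1}^{\infty}\left\langle \mathcal{T}x, x_i\right\rangle\left\langle y_i, \mathcal{T}x\right\rangle, \quad \text{for all } x\in\mathcal{H}.
$$
If $\mathcal{T}x=0$, then the right-hand sum vanishes, forcing $\left\|K^{\ast}x\right\|=0$; since $K$ has dense range, $K^{\ast}$ is injective, whence $x=0$. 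Thus $\mathcal{N}(\mathcal{T})=\{0\}$ and $\mathcal{T}$ is injective.

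Combining the two, $\mathcal{T}$ is a bijective bounded linear operator on $\mathcal{H}$, and the bounded inverse theorem gives $\mathcal{T}\in\mathrm{GL}(\mathcal{H})$. I do not anticipate a genuine obstacle: the statement is essentially a symmetric companion to the surjectivity theorem, and the only care required is the correct adjoint bookkeeping in the two biframe sums. It is worth noting, for a clean write-up, that injectivity uses only the density of $\mathcal{R}(K)$, whereas the closed-range hypothesis on $\mathcal{T}$ is invoked solely to upgrade $\overline{\mathcal{R}(\mathcal{T})}=\mathcal{H}$ to $\mathcal{R}(\mathcal{T})=\mathcal{H}$ in the surjectivity step.
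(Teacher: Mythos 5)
Your proof is correct and follows essentially the same route as the paper's: the paper's proof simply re-derives inline the surjectivity argument of the preceding theorem (injectivity of $\mathcal{T}^{\ast}$ from the first biframe, then $\mathcal{R}(\mathcal{T})=\mathcal{N}(\mathcal{T}^{\ast})^{\perp}=\mathcal{H}$ via the closed-range hypothesis) where you cite that theorem, and it obtains injectivity of $\mathcal{T}$ from the second biframe exactly as you do, via $\mathcal{N}(\mathcal{T})\subseteq\mathcal{N}(K^{\ast})=\{0\}$. Your explicit invocation of the bounded inverse theorem and your closing remark on where each hypothesis is used are just slightly more careful bookkeeping of the same argument.
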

\begin{proof}
 Assume that $\left(\left\{\mathcal{T}x_i\right\}_{i=1}^{\infty},\left\{\mathcal{T}y_i\right\}_{i=1}^{\infty}\right)$  is a $K$-biframe for $\mathcal{H}$ with frame bounds $A_1$ and $B_1$. Then for every $x\in \mathcal{H}$,
\begin{equation}\label{eq42}
A_1\left\|K^* x\right\|^2 \leq \sum_{i=1}^{\infty}\left\langle x,\mathcal{T} x_i\right\rangle\left\langle \mathcal{T}y_i, x\right\rangle \leq B_1\|x\|^2 .
\end{equation}
Since $K$ has a dense range, $K^*$ is injective. Then from (\ref{eq42}), it follows that $T^*$ is injective as $N\left(T^*\right) \subseteq N\left(K^*\right)$. Moreover 
$$\mathcal{R}(\mathcal{T})=\mathcal{N}\left(\mathcal{T}^{\ast}\right)^{\perp}=\mathcal{H}.$$ 
Then $\mathcal{T}$ is surjective.

Suppose that $\left(\left\{\mathcal{T}^{\ast}x_i\right\}_{i=1}^{\infty},\left\{\mathcal{T}^{\ast}y_i\right\}_{i=1}^{\infty}\right)$  is a $K$-biframe for $\mathcal{H}$ with frame bounds $A_2$ and $B_2$. Then for every $x\in \mathcal{H}$,
\begin{equation}\label{eq43}
A_2\left\|K^* x\right\|^2 \leq \sum_{i=1}^{\infty}\left\langle x,\mathcal{T}^{\ast} x_i\right\rangle\left\langle \mathcal{T}^{\ast}y_i, x\right\rangle \leq B_2\|x\|^2 .
\end{equation}

Since $K$ has a dense range, then $K^*$ is injective. From (\ref{eq43}), $\mathcal{T}$ is injective since $\mathcal{N}(\mathcal{T}) \subseteq \mathcal{N}\left(K^*\right)$.

 we can conclude that $\mathcal{T}$ is bijective. Therefore $\mathcal{T}$ is invertible.
\end{proof}
\begin{theorem}
 Let $K \in \mathcal{B}(\mathcal{H})$  be with a dense range. Let  $(\mathcal{X}, \mathcal{Y})_{K}=\left(\left\{x_i\right\}_{i=1}^{\infty},\left\{y_i\right\}_{i=1}^{\infty}\right)$ be a $K$-biframe for $\mathcal{H}$ and let $\mathcal{T}\in \mathcal{B}(\mathcal{H})$ be co-isometry with $\mathcal{T} K=K \mathcal{T}$. Then $\left(\left\{\mathcal{T}x_i\right\}_{i=1}^{\infty},\left\{\mathcal{T}y_i\right\}_{i=1}^{\infty}\right)$ is a $K$-biframe for $\mathcal{H}$.
\end{theorem}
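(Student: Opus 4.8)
The plan is to reduce the claim to the $K$-biframe inequality for $(\mathcal{X},\mathcal{Y})_{K}$ evaluated at the transformed vector $\mathcal{T}^{\ast}x$, exploiting that a co-isometry $\mathcal{T}$ makes $\mathcal{T}^{\ast}$ an isometry and that the commutation relation passes to the adjoints. This is essentially a change-of-variable argument inside the sum.

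First I would rewrite the central sum by moving $\mathcal{T}$ across each inner product. Since $\langle x,\mathcal{T}x_i\rangle=\langle \mathcal{T}^{\ast}x,x_i\rangle$ and $\langle \mathcal{T}y_i,x\rangle=\langle y_i,\mathcal{T}^{\ast}x\rangle$, for every $x\in\mathcal{H}$ we obtain
$$
\sum_{i=1}^{\infty}\left\langle x,\mathcal{T}x_i\right\rangle\left\langle \mathcal{T}y_i,x\right\rangle=\sum_{i=1}^{\infty}\left\langle \mathcal{T}^{\ast}x,x_i\right\rangle\left\langle y_i,\mathcal{T}^{\ast}x\right\rangle .
$$
Applying the defining inequality of the $K$-biframe $(\mathcal{X},\mathcal{Y})_{K}$ to the vector $\mathcal{T}^{\ast}x$ then yields
$$
A\left\|K^{\ast}\mathcal{T}^{\ast}x\right\|^2\leq\sum_{i=1}^{\infty}\left\langle \mathcal{T}^{\ast}x,x_i\right\rangle\left\langle y_i,\mathcal{T}^{\ast}x\right\rangle\leq B\left\|\mathcal{T}^{\ast}x\right\|^2 ,
$$
where $A,B$ are the original $K$-biframe bounds.

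Next I would simplify both ends using the co-isometry hypothesis. Since $\mathcal{T}$ is a co-isometry, $\mathcal{T}\mathcal{T}^{\ast}=I$, so $\mathcal{T}^{\ast}$ is an isometry and $\|\mathcal{T}^{\ast}x\|=\|x\|$; this turns the upper bound into $B\|x\|^2$. For the lower bound, taking adjoints in $\mathcal{T}K=K\mathcal{T}$ gives $K^{\ast}\mathcal{T}^{\ast}=\mathcal{T}^{\ast}K^{\ast}$, hence $K^{\ast}\mathcal{T}^{\ast}x=\mathcal{T}^{\ast}K^{\ast}x$, and invoking the isometry of $\mathcal{T}^{\ast}$ once more gives $\|K^{\ast}\mathcal{T}^{\ast}x\|=\|\mathcal{T}^{\ast}K^{\ast}x\|=\|K^{\ast}x\|$. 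Substituting these two identities produces
$$
A\left\|K^{\ast}x\right\|^2\leq\sum_{i=1}^{\infty}\left\langle x,\mathcal{T}x_i\right\rangle\left\langle \mathcal{T}y_i,x\right\rangle\leq B\|x\|^2,\quad\forall x\in\mathcal{H},
$$
which is exactly the $K$-biframe condition for $\left(\{\mathcal{T}x_i\},\{\mathcal{T}y_i\}\right)$, with the very same bounds $A$ and $B$.

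The argument carries no serious obstacle; the only points requiring care are the bookkeeping of adjoints (ensuring $\mathcal{T}$ enters the inner products as $\mathcal{T}^{\ast}$) and the correct use of the two distinct consequences of the co-isometry property — once to normalize $\|\mathcal{T}^{\ast}x\|=\|x\|$ for the upper bound, and once, combined with the commutation relation, to fix the lower bound. I would also note that the density of $\mathcal{R}(K)$ is not in fact needed here: the isometry of $\mathcal{T}^{\ast}$ alone carries the lower bound through, so this hypothesis could be dropped from the statement.
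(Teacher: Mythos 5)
Your proof is correct and follows essentially the same route as the paper's: rewrite the sum via adjoints, apply the original $K$-biframe inequality at $\mathcal{T}^{\ast}x$, and use the commutation relation together with the isometry of $\mathcal{T}^{\ast}$ to recover the bounds. The only (harmless) differences are that you obtain the cleaner upper bound $B\|x\|^2$ where the paper writes $B\|\mathcal{T}\|^2\|x\|^2$ (identical anyway since $\|\mathcal{T}\|=1$ for a co-isometry), and your observation that the dense-range hypothesis on $K$ is never used is accurate --- the paper's own proof does not use it either.
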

\begin{proof}
Assume that $(\mathcal{X}, \mathcal{Y})_{K}$ be a $K$-biframe for $\mathcal{H}$.  Then there are positive constants $0<A \leq B<\infty$ such that
$$
A\left\|K^* x\right\|^2 \leq  \sum_{i=1}^{\infty}\left\langle x, x_i\right\rangle\left\langle y_i, x\right\rangle \leq B\|x\|^2, \text { for all } x \in \mathcal{H} .
$$
Suppose that $\mathcal{T}\in \mathcal{B}(\mathcal{H})$ be co-isometry with $\mathcal{T} K=K \mathcal{T}$. Then for each $x\in \mathcal{H}$, we have
$$
\begin{aligned}
\sum_{i=1}^{\infty}\left\langle x,\mathcal{T} x_i\right\rangle\left\langle \mathcal{T}y_i, x\right\rangle  &=\sum_{i=1}^{\infty}\left\langle \mathcal{T}^{\ast}x, x_i\right\rangle\left\langle y_i, \mathcal{T}^{\ast}x\right\rangle\\ &\geq A\left\|K^*\mathcal{T}^{\ast} x\right\|^2 \\
& =A\left\|\mathcal{T}^{\ast} K^* x\right\|^2 \\
& =A\left\|K^* x\right\|^2. 
\end{aligned}
$$
On the other hand, for $x \in \mathcal{H}$ all we have 
$$\sum_{i=1}^{\infty}\left\langle x,\mathcal{T} x_i\right\rangle\left\langle \mathcal{T}y_i, x\right\rangle =\sum_{i=1}^{\infty}\left\langle \mathcal{T}^{\ast}x, x_i\right\rangle\left\langle y_i, \mathcal{T}^{\ast}x\right\rangle\leq B\|\mathcal{T}^{\ast}x\|^2 \leq B\|\mathcal{T}\|^2\|x\|^2. $$
Therefore $\left(\left\{\mathcal{T}x_i\right\}_{i=1}^{\infty},\left\{\mathcal{T}y_i\right\}_{i=1}^{\infty}\right)$ is a $K$-biframe for $\mathcal{H}$.
\end{proof}

\medskip

\section*{Declarations}

\medskip

\noindent \textbf{Availablity of data and materials}\newline
\noindent Not applicable.

\medskip

\noindent \textbf{Human and animal rights}\newline
\noindent We would like to mention that this article does not contain any studies
with animals and does not involve any studies over human being.

\medskip

\noindent \textbf{Conflict of interest}\newline
\noindent The authors declare that they have no competing interests.

\medskip

\noindent \textbf{Fundings} \newline
\noindent The authors declare that there is no funding available for this paper.

\medskip

\noindent \textbf{Authors' contributions}\newline
\noindent The authors equally conceived of the study, participated in its
design and coordination, drafted the manuscript, participated in the
sequence alignment, and read and approved the final manuscript. 

\medskip


\begin{thebibliography}{0}

\bibitem{Abramovich}Y. A. Abramovich, Charalambos, D. Aliprantis, An invitation to operator
theory, American Mathematical Society, 2002.

\bibitem{Christensen} O. Christensen, An Introduction to Frames and Riesz Bases. Birkhäuser, Basel (2003)

\bibitem{Daubechies2} I. Daubechies, Ten Lectures on Wavelets. SIAM, Philadelphia (1992)
\bibitem{Daubechies}
I. Daubechies, A. Grossmann, Y. Mayer,
\emph{Painless nonorthogonal expansions}, Journal of Mathematical Physics 27 (5) (1986) 1271-1283.
\bibitem{Douglas} R. G. Douglas, On majorization, factorization, and range inclusion of operators on Hilbert space. Proc. Am. Math. Soc. 17, 413–415 (1966).
\bibitem{Duffin}R. J. Duffin, A. C. Schaeffer,
\emph{A class of nonharmonic Fourier series}, Trans. Amer. Math. Soc., 72, (1952), 341-366.
 \bibitem{Fer} A. Fereydooni, A. Safapour, Pair frames, Results Math., 66 (2014) 247–263.

\bibitem{Gabor} D. Gabor, 1946. Theory of communications. J. Inst. Electr. Eng. 93: 429–457.
\bibitem{Han} D. Han, D.R. Larson, Frames, bases, and group representations. Mem. Am. Math. Soc. 147 (2000)

\bibitem{Limaye} B. V. Limaye, Functional analysis, New Age International Publishers Limited, New Delhi, second edition (1996).

\bibitem{MF}M. F. Parizi, A. Alijani and M. A. Dehghan
\emph{Biframes and some their properties}, Journal of Inequalities and Applications, https://doi.org/10.1186/s13660-022-02844-7.





\end{thebibliography}
\end{document}